\gdef\n@te#1#2{\leavevmode\vadjust{%
 {\setbox\z@\hbox to\z@{\strut#1}%
  \setbox\z@\hbox{\raise\dp\strutbox\box\z@}\ht\z@=\z@\dp\z@=\z@%
  #2\box\z@}}}
\gdef\leftnote#1{\n@te{\hss#1\quad}{}}
\gdef\rightnote#1{\n@te{\quad\kern-\leftskip#1\hss}{\moveright\hsize}}
\gdef\?{\FN@\qumark}
\gdef\qumark{\ifx\next"\DN@"##1"{\leftnote{\rm##1}}\else
 \DN@{\leftnote{\rm??}}\fi{\rm??}\next@}}
\theoremstyle{plain}
\newtheorem{theorem}{Theorem}
\newtheorem{proposition}{Proposition}
\newtheorem{lemma}{Lemma}
\newtheorem{remark}{\bf Remark}
\theoremstyle{definition}
\newtheorem{definition}{Definition}
\newtheorem{nothing*}[theorem]{}
\newtheorem{subnothing*}[sub]{}
\newtheorem*{exsnonumber}{Examples}
\theoremstyle{remark}
\def\Aut {{\rm Aut\,}}
\def\GL{{\rm GL}}
\def\Q{{\mathbb Q}}
\def\mult{\mathrm{mult}}
\begin{document}

\title[Root systems in number fields]
{Root systems in number fields}

\author[Vladimir L. Popov]{Vladimir L. Popov${}^{1, 2}$}
\thanks{${}^1$  Steklov Mathematical Institute,
Russian Academy of Sciences, Gub\-kina 8, Moscow
119991, Russia, {\rm popovvl@mi-ras.ru}\\
\indent ${}^2$ National Research University
Higher School of Economics, Myas\-nitskaya
20, Moscow 101000,\;Russia}

\author[Yuri G. Zarhin]{Yuri G. Zarhin${}^{3}$}
\thanks{${}^{3}$ Department of Mathematics,
Pennsylvania State University, Uni\-versity Park, PA
16802, USA, {\rm zarhin@math.psu.edu} \\
\indent The second named author (Y.\,Z.) is partially
supported by Simons Foundation Collaboration grant \# 585711.
Part of this work was done during his stay in May--July 2018
at the Max-Planck-Institut f\"ur Mathematik (Bonn, Germany),
whose hospitality and support are gratefully acknowledged}

\begin{abstract}
We classify the types of root systems $R$ in the rings
of integers of number fields  $K$ such that the Weyl
group $W(R)$ lies in the group $\mathcal L(K)$ generated
by ${\rm Aut}  (K)$ and multipli\-ca\-tions by the elements of $K^*$. We also
classify the Weyl groups of root systems of rank $n$
which are isomorphic to a subgroup of $\mathcal L(K)$
for a number field $K$ of degree $n$ over $\mathbb Q$.
\end{abstract}

\maketitle

\section{Introduction}

  In what follows, we call the type of a (not necessarily reduced)
  root system the type of its Dynkin diagram.

   Let $L$ be a free Abelian group of a finite rank $n>0$.\;We shall
   consider it as a lattice of full rank in the $n$-dimensional
   linear space $V:=L\otimes_{\mathbb Z}{\mathbb Q}$ over $\mathbb Q$.
   Since every root is an integer linear combination of simple roots, for every type
    ${\sf R}$ of the root systems of rank  $n$, there is a
    subset $R$ in $L$ of rank $n$, which is a root system of
    type ${\sf R}$.\;However, if the pair $(V, L)$ is endowed with an
    additional structure, then the Weyl group $W(R)$ of such a
    realization may be inconsistent with this structure.\;Say, if
    the space $ V $ is endowed with a scalar product,
then it may happen that the group $W(R)$ does not preserve it
 (for instance, if $n=2$ and $e_1, e_2$ is an orthonormal
 basis in $L$, then
$\{\pm e_1, \pm e_2, \pm(e_1+e_2)\}$ is the root system of
type ${\sf A}_2$ in $V$,
whose Weyl group does not consist of orthogonal
transformations).\;Therefore, it is of interest only finding
such realizations, the Weyl group of which is consistent with
some additional structures on the pair $(V, L)$.

 A natural source of pairs $(V, L)$ is algebraic number theory,
 in which they arise in the form $(K, {\mathscr O}_K)$, where $K$ is a
 number field, and ${\mathscr O}_K$ is its ring of integers.\;In this case,
 three subgroups are naturally distingui\-shed in the group
${\rm GL}_ {\mathbb Q}(K)$ of nondegenerate linear
transfor\-ma\-tions of the linear space $K$ over $\mathbb Q$.\;The
first one is the
automorphism group ${\rm Aut} (K)$ of the field $K$.\;The
second is the image of the group monomor\-phism
\begin{equation}\label{mult}
{\rm mult}\colon K^* \hookrightarrow {\rm GL}_{\mathbb Q}(K),
\end{equation}
where
${\rm mult}(a)$ is the operator of multiplication by
 $a\in K^*$:
 \begin{equation}\label{mult1}
 {\rm mult}(a)\colon K\to K,\;x\mapsto ax.
 \end{equation}
 The third one is the subgroup ${\mathcal L}(K)$ in
 ${\rm GL}_{\mathbb Q}(K)$ generated by ${\rm Aut} (K)$
 and ${\rm mult}(K^*)$.

\begin{definition}\label{def}
We say that {\it the type ${\sf R}$ of {\rm(}\hskip -.3mm not necessarily
reduced\,{\rm)} root systems admits a realization in
the number field} $K$ if
\begin{enumerate}[\hskip 3.2mm\rm(a)]

\item $[K:\mathbb Q]={\rm rk} ({\sf R})$;

\item there is a subset $R$ of rank ${\rm rk} ({\sf R})$
in ${\mathscr O}_K$, which is
a root system of type ${\sf R}$;

\item $W(R)$ is a subgroup of the group ${\mathcal L}(K)$.
\end{enumerate}
\noindent In this case, the set $R$ is called {\it a realization
of the type ${\sf R}$ in the field\;$K$}.
\end{definition}

It is worth noting that if we replace $\mathscr O_K$ by $K$ in (b), we do not obtain a broader concept.\;Indeed, if $R$ is a subset of rank ${\rm rk}({\sf R})$ in $K$, which is a root system of type ${\sf R}$  such that (a) and (c) hold, then there is a positive integer $m$ such that $m\cdot R:=\{m\alpha\mid \alpha\in R\}\subset {\mathscr O}_K$.\;Clearly the set $m\cdot R$
has rank ${\rm rk}({\sf R})$, it
is a root system of type ${\sf R}$, and  $W(m\cdot R)=W(R)$.

In view of Definition \ref{def}, if a type $\sf R$ of root
systems admits a realization in a
number field $K$,
then the group $\mathcal L(K)$ contains a subgroup
isomor\-phic to the Weyl group of a root system of type $\sf R$.
Our first main result is the classification of all the cases
when the latter property holds:

\begin{theorem}\label{repres}
 The following properties of the Weyl group $W(R)$
 of a reduced root system $R$
of type ${\sf R}$ and rank $n$ are equivalent:
\begin{enumerate}[\hskip 2.2mm\rm(i)]
\item $W(R)$ is isomorphic to a subgroup $G$ of the group
${\mathcal L}(K)$, where $K$ is a number
field of degree $n$ over $\mathbb Q$;
\item $\sf R$ is contained in the following list:
\begin{gather}\label{Weyl}{\sf A}_1,\;\,{\sf A}_2,\;\,{\sf B}_2,\;\,{\sf G}_2,\;\,2{\sf A}_1,\;\, 2{\sf A}_1\stackrel{.}{+}{\sf A}_2,\;\,
{\sf A}_2\stackrel{.}{+}{\sf B}_2.
\end{gather}
\end{enumerate}
\end{theorem}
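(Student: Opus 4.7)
The plan is to exploit the semidirect product structure $\mathcal L(K)\cong\mathrm{mult}(K^*)\rtimes\mathrm{Aut}(K)$, which follows from the identity $\sigma\,\mathrm{mult}(a)\,\sigma^{-1}=\mathrm{mult}(\sigma(a))$ together with the observation that $\mathrm{Aut}(K)\cap\mathrm{mult}(K^*)=\{\mathrm{id}\}$ (a map in the intersection would have to send $1$ to both $1$ and $a$). Any finite subgroup $G\subset\mathcal L(K)$ therefore fits in an exact sequence $1\to C\to G\to H\to 1$, where $C:=G\cap\mathrm{mult}(K^*)\subset\mathrm{mult}(\mu_K)$ is a normal cyclic subgroup (finite subgroups of multiplicative groups of fields being cyclic) and $H$ embeds in $\mathrm{Aut}(K)$. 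Applying this to $G\cong W(R)$ with $n=[K:\mathbb Q]=\mathrm{rk}(R)$ yields two divisibility constraints: $|W(R)/C|$ divides $|\mathrm{Aut}(K)|$, which in turn divides $n$; and $\phi(|C|)\mid n$ since $\mathbb Q(\zeta_{|C|})\subset K$.

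The implication (i)$\Rightarrow$(ii) now reduces to checking that among reduced root systems only the seven types in \eqref{Weyl} admit a normal cyclic subgroup $C\subset W(R)$ compatible with both divisibilities. For $n=1$ the only possibility is $W({\sf A}_1)$; for $n=2$ all four reduced rank-$2$ types $2{\sf A}_1,{\sf A}_2,{\sf B}_2,{\sf G}_2$ qualify with $|C|=2,3,4,6$ respectively; and for $n=3$ the condition $\phi(|C|)\mid 3$ forces $|C|\leq 2$, hence $|W|\leq 6$, whereas every reduced rank-$3$ Weyl group has order at least $8$. For $n=4$ a finite case analysis over all reduced rank-$4$ root systems---computing the maximal normal cyclic subgroup of each Weyl group and testing whether its quotient has order dividing $4$---leaves exactly the two survivors $2{\sf A}_1\stackrel{.}{+}{\sf A}_2$ (with $C\cong\mathbb Z/6$, $W/C\cong(\mathbb Z/2)^2$) and ${\sf A}_2\stackrel{.}{+}{\sf B}_2$ (with $C\cong\mathbb Z/12$, $W/C\cong(\mathbb Z/2)^2$).

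For $n\geq 5$ the argument is uniform. The minimum order of a reduced rank-$n$ Weyl group is $2^n$, achieved by $W(n{\sf A}_1)$, whose largest normal cyclic subgroup has order $2$; so $|W|/|C|=2^{n-1}>n$ already for $n\geq 4$. For other decompositions into irreducible components, factors of rank $\geq 3$ contribute large multiplicative factors to $|W|$ with negligible normal cyclic contributions (their centers are of order $1$ or $2$), while restricting to factors from $\{{\sf A}_1,{\sf A}_2,{\sf B}_2,{\sf G}_2\}$ bounds $|C|$ by products of cyclic subgroups of orders $2,3,4,6$, which still cannot outpace $|W|/n$ once $n\geq 5$. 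I expect the main obstacle to be precisely the rank-$4$ enumeration (where normality of $C$ inside a direct product of Weyl groups imposes subtle restrictions) and this uniform bound for $n\geq 5$, which must be verified across all decomposition patterns.

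Finally, for (ii)$\Rightarrow$(i) I exhibit explicit realizations: $K=\mathbb Q$ for ${\sf A}_1$; the quadratic fields $\mathbb Q(\sqrt d),\mathbb Q(\zeta_3),\mathbb Q(i),\mathbb Q(\zeta_3)$ realize $2{\sf A}_1,{\sf A}_2,{\sf B}_2,{\sf G}_2$ as $\mathrm{mult}(\mu_m)\rtimes\mathrm{Aut}(K)$ for $m=2,3,4,6$ respectively; and for the rank-$4$ survivors one takes $K=\mathbb Q(\sqrt{-3},\sqrt d)$ (biquadratic, containing $\mu_6$, with $\mathrm{Aut}(K)\cong(\mathbb Z/2)^2$) and $K=\mathbb Q(\zeta_{12})$ (containing $\mu_{12}$, with $\mathrm{Aut}(K)\cong(\mathbb Z/2)^2$). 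One verifies the isomorphisms $\mu_6\rtimes(\mathbb Z/2)^2\cong W(2{\sf A}_1\stackrel{.}{+}{\sf A}_2)$ and $\mu_{12}\rtimes(\mathbb Z/12)^*\cong S_3\times D_4=W({\sf A}_2\stackrel{.}{+}{\sf B}_2)$ by direct computation, the second using the Chinese remainder decomposition $\mu_{12}\cong\mu_3\times\mu_4$ and the corresponding splitting of the Galois action.
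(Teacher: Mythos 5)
Your structural setup coincides with the paper's: the semidirect decomposition $\mathcal L(K)=\mathrm{mult}(K^*)\rtimes\mathrm{Aut}(K)$ and the exact sequence $1\to C\to G\to \psi(G)\to 1$ with $C\subseteq \mathrm{mult}(\mu_K)$ cyclic and normal, together with the divisibilities $\varphi(|C|)\mid n$ and $|G/C|$ dividing $|\mathrm{Aut}(K)|$ dividing $n$, are exactly Lemmas \ref{keyK} and \ref{key}. Your treatment of ranks $1$ through $4$ is sound and, for $n=4$, arguably slicker than the paper's: the constraint $\varphi(|C|)\mid 4$ forces $|C|\leqslant 12$, hence $|W|\leqslant 48$, and exponent/cyclicity considerations then do eliminate every rank-$4$ type except $2{\sf A}_1\stackrel{.}{+}{\sf A}_2$ and ${\sf A}_2\stackrel{.}{+}{\sf B}_2$ (I checked all cases). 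Your realizations for (ii)$\Rightarrow$(i) are also correct and close to the paper's, which uses $\mathbb Q(\sqrt{-3},\sqrt{-1})=\mathbb Q(\zeta_{12})$ for both rank-$4$ types.

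The genuine gap is the elimination of all $n\geqslant 5$. What you offer there is a heuristic, not a proof, and it has two specific holes. First, you bound the normal cyclic subgroup $C$ of $W=W(R_1)\times\cdots\times W(R_d)$ by products of centers of the factors of rank $\geqslant 3$ and by ``products of cyclic subgroups of orders $2,3,4,6$'' for the small factors. The correct mechanism is that each projection of $C$ to $W(R_i)$ is a \emph{normal cyclic} subgroup of $W(R_i)$ and that $C$, being cyclic, has order dividing the least common multiple of the orders of these projections; that the largest normal cyclic subgroup of an irreducible Weyl group of rank $\geqslant 3$ has order at most $2$ (it is not simply ``the center'' for ranks $\leqslant 2$, where it is non-central of order $3$, $4$, $6$) is a fact you assert without argument. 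Second, even granting these bounds, the claim that they ``cannot outpace $|W|/n$ once $n\geqslant 5$'' quantifies over infinitely many decomposition patterns and is nowhere verified; you acknowledge this yourself. The paper closes exactly this hole with a uniform device absent from your plan: the lower bound $\nu_2(|W(R)|)\geqslant[(n+1)/2]$ of Lemma \ref{W>} played against the upper bound $\nu_2(|G|)\leqslant 2\nu_2(n)+1$ of Lemma \ref{key}(vi) confines $n$ to the finite set $\{1,2,4,6,8,16\}$ in one stroke, after which only $n=6,8,16$ need ad hoc elimination (via $|G|$ dividing $n\exp(G)$ and the $m{\sf A}_1$ subgroup criterion). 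To complete your proof you should either import such a $2$-adic inequality or convert your sketch into an actual induction over the irreducible factors using the divisibility $|W|\mid n\exp(W)$, which already follows from your own two constraints.
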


The fact that a subgroup $G$ of the group $\mathcal L(K)$
is isomorphic to the Weyl group of a root system of
rank $n=[K:\mathbb Q]$ and of type ${\sf R}$ is not
equivalent to the fact that  $G=W(R)$, where $R$ is a
root system of type ${\sf R}$ in ${\mathscr O}_K$.\;This is seen
from comparing Theorem \ref{repres} with
our second main result.\;The latter answers the question
of which of the types of root systems in list \eqref{Weyl}
are realized in number fields:

\begin{theorem}\label{R} For every type ${\sf R}$ of
root systems {\rm(}not necessarily reduced\,{\rm)}, the following properties are equivalent:
\begin{enumerate}[\hskip 2.0mm\rm(i)]

\item there is a number field, in which  ${\sf R}$ admits
a realization;

\item ${\rm rk}(\sf R)=1$ or $2$.
\end{enumerate}
\end{theorem}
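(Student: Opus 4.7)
The plan is to prove the two implications separately; the crux is a single structural observation about fixed subspaces of elements of $\mathcal L(K)$, which makes (i)$\Rightarrow$(ii) immediate, while (ii)$\Rightarrow$(i) reduces to exhibiting explicit realizations in Gaussian and Eisenstein integers.

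For (i)$\Rightarrow$(ii), I would first observe that $\mathrm{mult}(K^*)$ is normal in $\mathcal L(K)$, since $\sigma\circ\mathrm{mult}(a)\circ\sigma^{-1}=\mathrm{mult}(\sigma(a))$ for $\sigma\in\mathrm{Aut}(K)$ and $a\in K^*$. Hence every $g\in\mathcal L(K)$ factors as $g=\mathrm{mult}(a)\circ\sigma$, and its $\mathbb Q$-fixed subspace in $K$ equals $\{x\in K:a\sigma(x)=x\}$. If this subspace contains some $x_0\neq 0$, then any other fixed $y$ satisfies $\sigma(y/x_0)=y/x_0$, so it coincides with $x_0\cdot K^\sigma$ and has $\mathbb Q$-dimension $[K^\sigma:\mathbb Q]$, a divisor of $n:=[K:\mathbb Q]$. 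A reflection $s_\alpha\in W(R)$ fixes a $\mathbb Q$-hyperplane of dimension $n-1$; since $W(R)$ is generated by its reflections, the hypothesis $W(R)\subset\mathcal L(K)$ forces $(n-1)\mid n$ and hence $n\in\{1,2\}$.

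For (ii)$\Rightarrow$(i) I would enumerate the types of rank $\leq 2$ and exhibit realizations. In rank $1$, the types ${\sf A}_1$ and ${\sf BC}_1$ are realized in $K=\mathbb Q$ by $\{\pm 1\}$ and $\{\pm 1,\pm 2\}$; their Weyl group $\{\pm 1\}$ sits in $\mathrm{mult}(\mathbb Q^*)=\mathcal L(\mathbb Q)$. For rank $2$ the types are $2{\sf A}_1,\,{\sf A}_2,\,{\sf B}_2,\,{\sf G}_2,\,{\sf BC}_2,\,{\sf A}_1+{\sf BC}_1,\,{\sf BC}_1+{\sf BC}_1$. The key computational remark is: for an imaginary quadratic field $K$ with complex conjugation $\sigma$ and any $\alpha\in K^*$, the map $z\mapsto -(\alpha/\sigma(\alpha))\sigma(z)=\mathrm{mult}(-\alpha/\sigma(\alpha))\circ\sigma$ belongs to $\mathcal L(K)$ and, under the identification $K\otimes_{\mathbb Q}\mathbb R=\mathbb C$ with its standard Euclidean structure, is the orthogonal reflection fixing the line perpendicular to $\alpha$ and sending $\alpha\mapsto -\alpha$.

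Using this, in $K=\mathbb Q(i)$ the Gaussian-integer sets $\{\pm 1,\pm i\}$, $\{\pm 1,\pm i,\pm(1\pm i)\}$, $\{\pm 1,\pm 2,\pm i,\pm 2i,\pm(1\pm i)\}$, $\{\pm 1,\pm i,\pm 2i\}$, and $\{\pm 1,\pm 2,\pm i,\pm 2i\}$ realize $2{\sf A}_1$, ${\sf B}_2$, ${\sf BC}_2$, ${\sf A}_1+{\sf BC}_1$, ${\sf BC}_1+{\sf BC}_1$ respectively; in $K=\mathbb Q(\zeta_3)$, the Eisenstein units $\{\pm 1,\pm\zeta_3,\pm\zeta_3^2\}$ form an ${\sf A}_2$, and together with the length-$\sqrt 3$ roots $\{\pm(1-\zeta_3),\pm(\zeta_3-\zeta_3^2),\pm(1-\zeta_3^2)\}$ form a ${\sf G}_2$. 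Each listed set lies in $\mathscr O_K$ and is the classical realization of the claimed type, and its Weyl group, being generated by the reflections $s_\alpha$ in its roots, lies in $\mathcal L(K)$ by the key remark. The only step requiring real insight is the fixed-subspace dimension calculation in the forward direction; everything else is verification of well-known small root systems.
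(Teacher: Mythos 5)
Your proposal is correct, and its forward direction takes a genuinely different and substantially more economical route than the paper's. The paper proves (i)$\Rightarrow$(ii) by first invoking Theorem \ref{repres} --- whose proof occupies most of the paper (the bounds on finite subgroups of $\mathcal L(K)$ from Lemma \ref{key}, the order tables for Weyl groups, Propositions \ref{l3} and \ref{mA1}, and the case analysis of Proposition \ref{124}) --- to cut the candidates down to the two rank-$4$ types ${\sf A}_2\stackrel{.}{+}{\sf B}_2$ and $2{\sf A}_1\stackrel{.}{+}{\sf A}_2$, and then eliminates these by showing that an order-$3$ element $z$ of the ${\sf A}_2$-factor of the Weyl group must equal ${\rm mult}(a)$ with $a\neq 1$ (since ${\rm ord}(g)$ would have to divide both $3$ and $4$), whence $K^z=0$, contradicting $L_2\subseteq K^z$. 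Your argument bypasses all of this: the observation that the fixed space of ${\rm mult}(a)\sigma$ is either $0$ or a translate $x_0K^{\sigma}$ of the fixed field, hence of dimension $[K^{\sigma}:\mathbb Q]$ dividing $n$, applied to a single reflection (whose fixed space has dimension $n-1$), forces $(n-1)\mid n$ for $n\geqslant 2$ and so $n\leqslant 2$ outright. This is the paper's fixed-space computation pushed to its natural generality, and it renders Theorem \ref{R} logically independent of Theorem \ref{repres}; it does not, of course, subsume Theorem \ref{repres} itself, where the subgroup $G$ is only abstractly isomorphic to a Weyl group and the elements corresponding to reflections need not act as reflections on $K$ --- which is exactly why the two theorems have different answers in rank $4$. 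Your (ii)$\Rightarrow$(i) direction is essentially the paper's Section \ref{s1}: the same seven rank-$2$ types (your ${\sf BC}_1$ is the paper's ${\sf A}'_1$), the same reflections ${\rm mult}(-\alpha\overline{\alpha}^{-1})c$, and the same Gaussian and Eisenstein configurations, up to a rotation by $i$ in the ${\sf A}_1\stackrel{.}{+}{\sf A}'_1$ case. Two small presentational points: the divisibility step should explicitly exclude $n=1$ (where the fixed hyperplane is zero and there is nothing to prove), and what you actually use is only that $W(R)$ \emph{contains} a reflection, not that it is generated by them.
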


For ${\rm rk}(\sf R)=1$ or $2$, the specific realizations
of $\sf R$ in number fields see in Section \ref{s1}.

\vskip 4mm

We are grateful to the referee for
informative
comments, which cont\-ri\-buted to making tangible improve\-ments of the text of this paper.

\vskip 4mm

\noindent{\it Terminology and notation}

\vskip 1mm

If $R$ is a root system of type
$\sf R$,
then the type
of the direct sum $mR$ of $m$ copies of $R$ is denoted by
$m{\sf R}$. We say that $\sf R$ is irreducible if $R$\;is.

All root systems of type ${\sf R}$ have the same rank
denoted by ${\rm rk}({\sf R})$.

${\sf A}'_1$ is the unique type of nonreduced root systems of rank $1$.

By a number field $K$ we mean a field that is an extension of a
finite degree of the field\;$\mathbb Q$.

$\mu_K$ is the multiplicative
group of all roots of unity in $K$; it is a finite cyclic group.

${\mathscr O}_K$ is the ring of all integers in $K$.

${\mathscr O}_K(d)$ is the set of all elements of ${\mathscr O}_K$, whose norm is $d$.

${\rm ord}(g)$ is the order of an element $g$ of a group

 $\langle g\rangle$ is a cyclic group with the generating element $g$.

 $\exp(G)$ is the exponent of a finite group $G$ (i.e., the least common multiple of the orders of all elements of $G$).

 $[G, G]$ is  the commutator subgroup of a group $G$.

 ${\rm Sym}_n$ and ${\rm Alt}_n$ are respectively the symmetric and alternating groups of permutations of $\{1,\ldots, n\}$.

For a prime number $p$ and a non-zero integer $n$,
the $p$-adic valuation of $n$ is denoted by $\nu_p(n)$
(i.e., $\nu_p(n)$ is the highest exponent $e$ such that $p^e$ divides $n$).

$\varphi$ is Euler's totient function, i.e., for every
integer $d>0$, the value $\varphi(d)$  is the number
of positive integers
relatively prime to\;$d$ and $\leqslant d$.

\section{Ranks $1$ and $2$}\label{s1}

The following examples show that
every type ${\sf R}$ of root systems of rank 1
or  2 admits a realization in an number field $K$.

\eject

{\it Root systems of types ${\sf A}_1$ and
${\sf A}'_1$}.

\vskip 1.5mm

In this case,  $K=\mathbb Q$, ${\mathscr O}_K=\mathbb Z$
and $\mathcal L(K)={\rm mult}(\mathbb Q^*)$.
If $\alpha\in \mathbb Z$, $\alpha\neq 0$,
then $R:=\{\pm\alpha\}$
(respectively,  $R:=\{\pm\alpha, \pm2\alpha\}$)
is a realization of type  ${\sf A}_1$ (respectively,
${{\sf  A}}'_1$) in the field $K$, because
$W(R)=\langle{\rm mult}(-1)\rangle\subset \mathcal L(K)$.

\vskip 2mm

{\it Root systems of types $\sf A_2$
and $\sf G_2$}.

\vskip 1.5mm

Let $K$ be the third cyclotomic field: $K=
\mathbb Q(\sqrt{-3})$.\;Then ${\mathscr O}_K=\mathbb Z+
\mathbb Z\omega$, where $\omega= (1+i\sqrt{3})/2$,
and ${\rm Aut} (K)=\langle c\rangle$, where $c$
is the complex conjugation $a\mapsto \overline a$.
The bilinear form
\begin{equation}\label{Eu}
K \times K\to \mathbb Q,\;
(a, b)\mapsto {\rm trace}_{K/\mathbb Q}(a\overline{b})=
2{\rm Re}(a\overline{b}),
\end{equation}
 defines on $K$
a structure of Euclidean space over $\mathbb Q$.\;Any
element of  $\mathcal L(K)$, whose order is finite
(in particular, any reflection), is an ortho\-go\-nal
(with respect to this structure) transformation.

Since $r_1:={\rm mult}(-1)c\in \mathcal L(K)$
is a reflection with respect to $1$,  the transformation
$\rho r_1{\rho}^{-1}$ for every
$\rho\in {\rm GL}_{\mathbb Q}(K)$ is
a reflection with respect to $\rho(1)$.
For $\rho={\rm mult}(a)$, where $a\in K^*$,
this yields the element
\begin{equation}\label{refl}
r_a:={\rm mult}(-a{\overline{a}}^{-1})c
\end{equation}
of $\mathcal L(K)$, which is a reflection with
respect to $a$.

The multiplicative group $\{\pm 1, \pm \omega,
\pm \omega^2\}$ of all $6$th roots of 1 coincides
with ${\mathscr O}_K(1)$.  Hence
\begin{equation*}
{\mathscr O}_K(1)=\{\pm\alpha_1,
\pm \alpha_2, \pm (\alpha_1+\alpha_2)\},\;
\mbox{where $\alpha_1=1$, $\alpha_2=\omega^2$.}
\end{equation*}
Therefore,
${\mathscr O}_K(1)$ is the root system of type ${\sf A}_2$
with the base $\alpha_1, \alpha_2$.\;If $a\in {\mathscr O}_K(1)$, then $-a\overline{a}^{-1}$ is a root of $1$, hence \eqref{refl} implies that
$r_a({\mathscr O}_K(d))={\mathscr O}_K(d)$ for any $d$.\;Therefore, $r_a\in
W({\mathscr O}_K(1))$.\;Hen\-ce $W({\mathscr O}_K(1))\subset
\mathcal L(K)$. This means that
${\mathscr O}_K(1)$ is the realization of type
${\sf A}_2$ in the field $K$.

Since we have
\begin{equation}\label{o3}
{\mathscr O}_K(3)=(1+\omega){\mathscr O}_K(1),
\end{equation}
the set
${\mathscr O}_K(3)$ is the root system of type ${\sf A}_2$
with the base
$$\beta_1=(1+\omega)\alpha_1, \beta_2=
(1+\omega)\alpha_2.$$
If $a\!\in\! {\mathscr O}_K(3)$, then \eqref{o3} implies that $-a\overline{a}^{-1}$ is a root of $1$.\;This and \eqref{refl} yield
$r_a({\mathscr O}_K(d))\!=\!{\mathscr O}_K(d)$ for any $d$.\;Therefore,
$W({\mathscr O}_K(3))\subset \mathcal L(K)$. Hence
${\mathscr O}_K(3)$ is yet another realization of type
${\sf A}_2$ in the field $K$.

\eject

\begin{figure}[h!]
\begin{center}\hskip -17mm
\includegraphics[width=.64\textwidth]{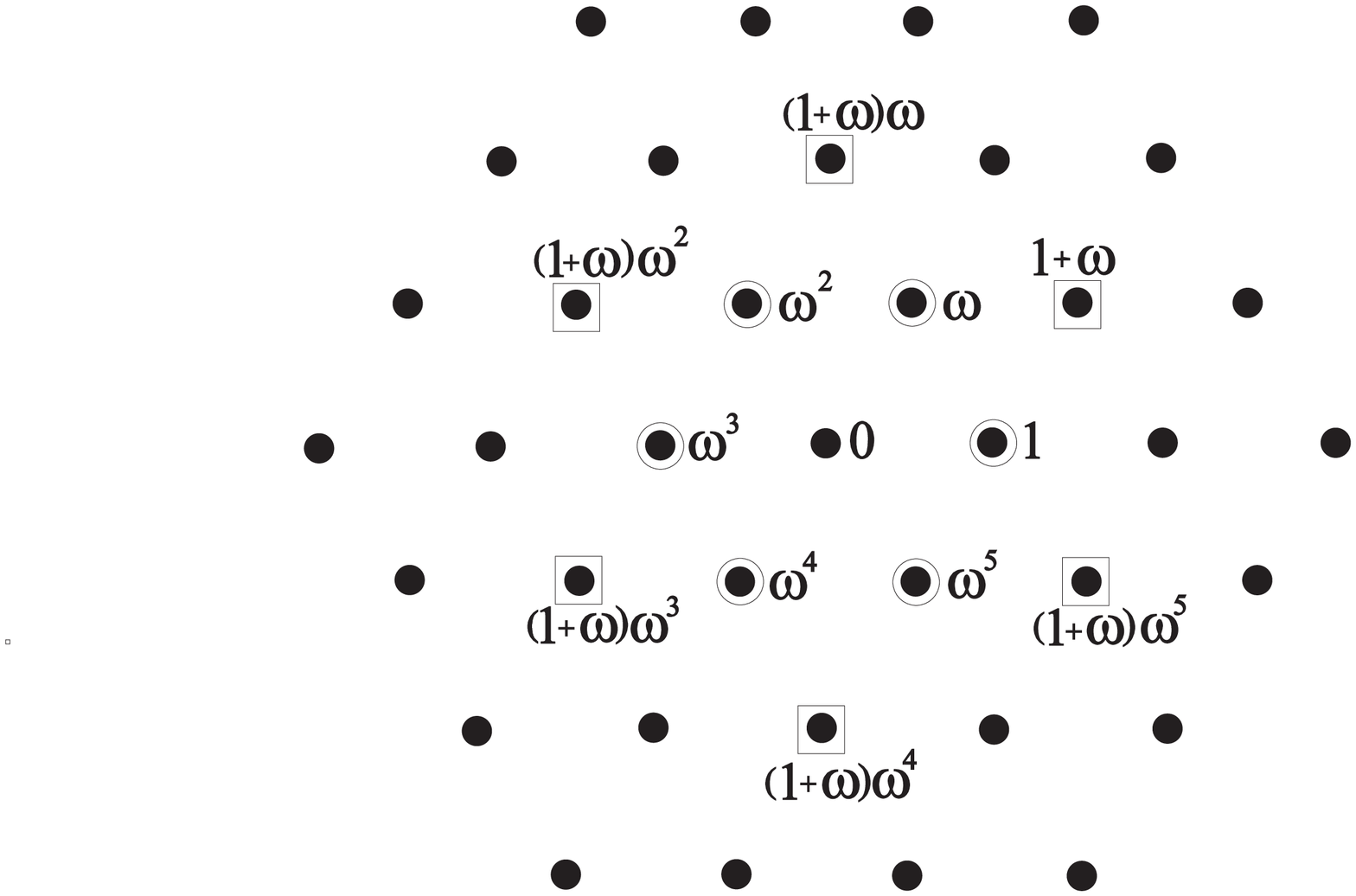}
\end{center}
\end{figure}

\

\vskip -15mm

\

\begin{quote}
{\small {\sc Firure 1.} Elements of ${\mathscr O}_K$, ${\mathscr O}_K(1)$,
and ${\mathscr O}_K(3)$ are depicted respectively by $\bullet$,
$\bullet$\hskip -2.5mm $\circledcirc$, and
$\bullet \hskip -2.5mm \raisebox{-.17\height}{$\boxdot$}$
}
\end{quote}

\vskip 2mm

Since we have
\begin{align*}
{\mathscr O}_K&(1)\textstyle\bigcup {\mathscr O}_K(3)\\
&=\{\pm\alpha_1,
\pm\beta_2, \pm (\alpha_1\!+\!\beta_2),
\pm (2\alpha_1\!+\!\beta_2), \pm (3\alpha_1\!+\!\beta_2),
\pm (3\alpha_1\!+\!2\beta_2)\},
\end{align*}
 the set ${\mathscr O}_K(1)
\bigcup {\mathscr O}_K(3)$ is the root system of type ${\sf G}_2$
with the base $\alpha_1, \beta_2$ (this is
noted in \cite[V, 16]{Se01}).
If $a\in {\mathscr O}_K(3)$, $b\in {\mathscr O}_K(1)$, then
$r_a({\mathscr O}_K(1))={\mathscr O}_K(1)$,
$r_b({\mathscr O}_K(3))={\mathscr O}_K(3)$.\;Therefore,
$W\big({\mathscr O}_K(1)\bigcup {\mathscr O}_K(3)\big)\subset
\mathcal L(K)$. Hence ${\mathscr O}_K(1)\bigcup {\mathscr O}_K(3)$
is the realization of type ${\sf G}_2$ in the field\;$K$.

\vskip 2mm

{\it Root systems ${\sf B}_2$,  $2{\sf A}_1
$, ${\sf BC}_2$,  $2{\sf A}_1$, $2{\sf A}'_1$, and ${\sf A}_1\stackrel{.}+{\sf A}'_1$}.

\vskip 1.5mm

Let $K$ be the fourth cyclotomic field: $K=
\mathbb Q(\sqrt{-1})$. Then ${\mathscr O}_K=\mathbb Z+
\mathbb Z i$ and ${\rm Aut} (K)=\langle c\rangle$,
where $c$ is the complex conjugation $a\mapsto \overline a$.
As above, \eqref{Eu} defines on $K$ a structure of
Euclidean space over $\mathbb Q$, and  any
element of $\mathcal L(K)$ of finite order
(in particular, any reflection) is an orthogonal
(with respect to this structure) transformation.

As above, for every  $a\in K^*$, the element
$r_a\in\mathcal L(K)$, given by formula \eqref{refl},
is a reflection with respect to $a$.

The multiplicative group $\{\pm 1, \pm i\}$
of all $4$th roots of 1 coincides with  ${\mathscr O}_K(1)$.\;Therefore,
\begin{equation*}
{\mathscr O}_K(1)=\{\pm\alpha_1, \pm\alpha_2\}
\end{equation*}
is the root system of type
$2{\sf A}_1
$ with the
base $\alpha_1\!=\!1, \alpha_2\!=\!i$.\;If $a\!\in\!
{\mathscr O}_K(1)$, then  $-a\overline{a}^{-1}$ is a root of $1$, hence \eqref{refl} implies that
$r_a({\mathscr O}_K(d))={\mathscr O}_K(d)$ for any $d$.
Therefore,
$r_a\!\in\! W({\mathscr O}_K(1))$; whence $W({\mathscr O}_K(1))\!\subset\!
\mathcal L(K)$.\;So, ${\mathscr O}_K(1)$ is the
realization of type $2{\sf A}_1
$ in\;$K$.

Since we have
\begin{equation}\label{o22}
{\mathscr O}_K(2)=(1+i){\mathscr O}_K(1),
\end{equation}
the set
${\mathscr O}_K(2)$ is the root system of type $2{\sf A}_1
$ with the base
$$\beta_1=(1+i)\alpha_1,\; \beta_2=(1+i)
\alpha_2.$$
If $a\!\in\! {\mathscr O}_K(2)$,
then \eqref{o22} implies that $-a\overline{a}^{-1}$ is a root of $1$.\;This and \eqref{refl} yield
$r_a({\mathscr O}_K(d))\!=\!{\mathscr O}_K(d)$ for any $d$.\;Therefore,
$W\big({\mathscr O}_K(2)\big)\!\subset\! \mathcal L(K)$.
Hence ${\mathscr O}_K(2)$ is yet another realization of
type $2{\sf A}_1
$ in $K$.

Since we have
$${\mathscr O}_K(1)\textstyle \bigcup {\mathscr O}_K(2)=\{\pm \alpha_1,
\pm\beta_2, \pm(\alpha_1+\beta_2),
\pm(2\alpha_1+\beta_2)\},$$
the set ${\mathscr O}_K(1)
\bigcup {\mathscr O}_K(2)$ is the root system of type ${\sf B}_2$
with the base $\alpha_1, \beta_2$. If $a\in
{\mathscr O}_K(2)$, $b\in {\mathscr O}_K(1)$, then $r_a({\mathscr O}_K(1))={\mathscr O}_K(1)$,
$r_b({\mathscr O}_K(2))={\mathscr O}_K(2)$. Therefore,
$W\big({\mathscr O}_K(1)\bigcup {\mathscr O}_K(2)\big)\subset
\mathcal L(K)$, hence ${\mathscr O}_K(1)\bigcup {\mathscr O}_K(2)$
is the realization of type ${\sf B}_2$ in the field $K$.

Since we have
$$
{\mathscr O}_K(4)=2{\mathscr O}_K(1),
$$
the group $W({\mathscr O}_K(4))$ coincides with
the group $W({\mathscr O}_K(1))$.\;Therefore,  ${\mathscr O}_K(4)$
is yet another realization
of type $2{\sf A}_1
$ in $K$. Since
\begin{align*}
\textstyle {\mathscr O}_K(1)\bigcup &{\mathscr O}_K(2)\textstyle\bigcup {\mathscr O}_K(4)\\
&=
\{\pm \alpha_1, \pm 2\alpha_1,  \pm\beta_2,
\pm(\alpha_1+\beta_2), \pm 2(\alpha_1+\beta_2),
\pm(2\alpha_1+\beta_2)\},
\end{align*}
the set
${\mathscr O}_K(1)\bigcup {\mathscr O}_K(2)\bigcup {\mathscr O}_K(4)$
is the root system of type ${\sf BC}_2$ with the base
$\alpha_1, \beta_2$. In view of
$W\big({\mathscr O}_K(1)\bigcup {\mathscr O}_K(2)\bigcup {\mathscr O}_K(4)\big)
\subset \mathcal L(K)$, it is the realization of
type ${\sf BC}_2$ in $K$.

\begin{figure}[h!]
\begin{center}\hskip -0mm
\includegraphics[width=.481\textwidth]{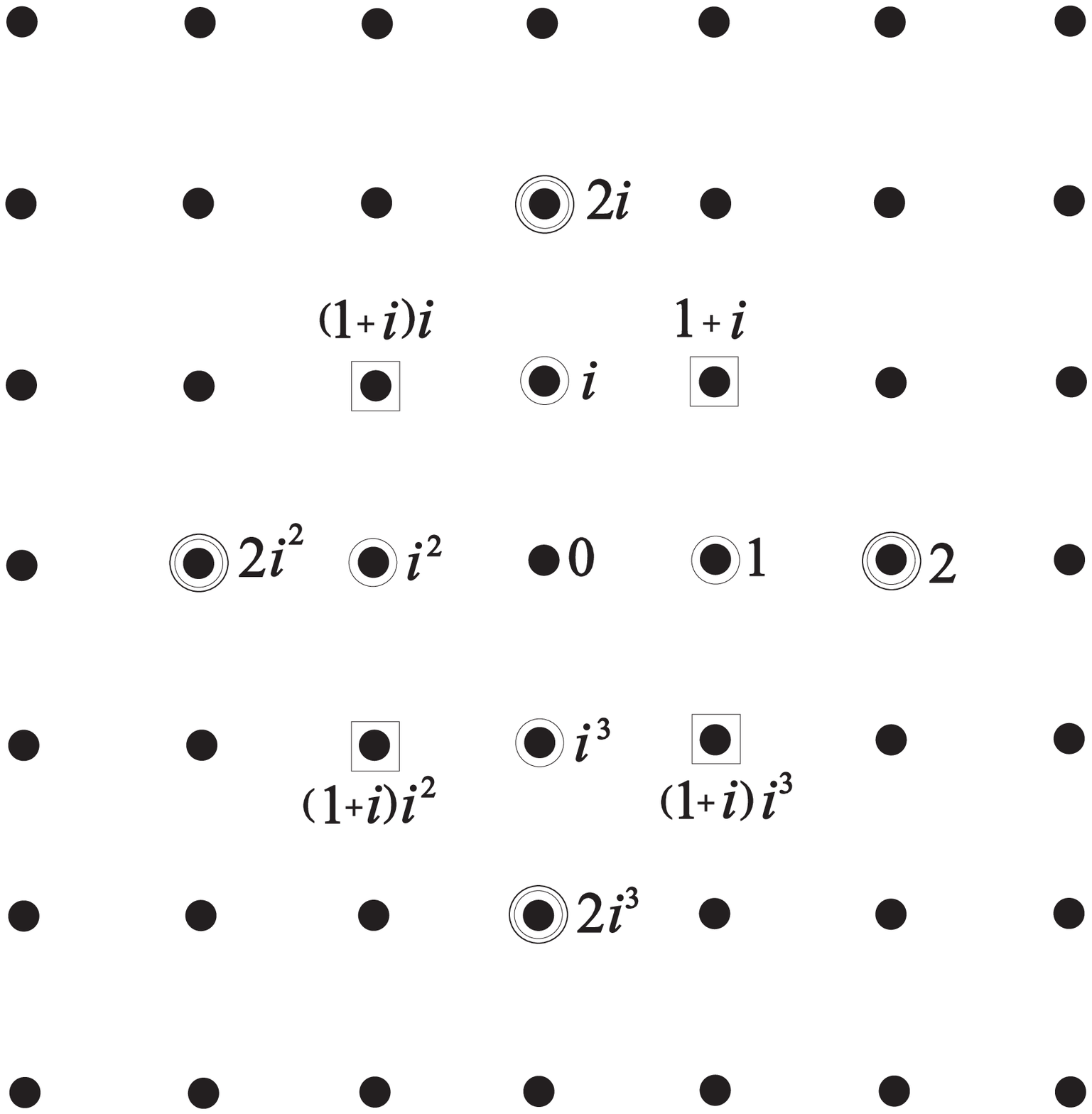}
\end{center}
\end{figure}

\

\vskip -15mm

\

\begin{quote}
{\small {\sc Firure 2.} Elements of ${\mathscr O}_K$, ${\mathscr O}_K(1)$,
${\mathscr O}_K(2)$, and ${\mathscr O}_K(4)$ are depicted respectively by
$\bullet$, $\bullet$\hskip -2.5mm $\circledcirc$,
$\bullet \hskip -2.5mm \raisebox{-.17\height}{$\boxdot$}$,
and \hskip 1mm $\bullet$\hskip -2.6mm
\raisebox{-.02\height}{$\circledcirc$}  \hskip -4.68mm
\raisebox{-.10\height}{$\mbox{\large $\circledcirc$}$}
}
\end{quote}

\vskip 2mm

Finally,  the realizations of types  $2{\sf A}'_1$ and
${\sf A}_1\stackrel{.}+{\sf A}'_1$
in $K$ are respecti\-ve\-ly
$$
{\mathscr O}_K(1)\textstyle \bigcup {\mathscr O}_K(4)\;\; \mbox{and}\;\; {\mathscr O}_K(1)
\bigcup \{\pm 2\}.
$$

Summing up, we have the following
\begin{proposition}\label{rank12}
Every type of root systems of rank $\leqslant 2$ admits
a re\-a\-li\-zation in a number field.
\end{proposition}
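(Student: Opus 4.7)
The plan is to enumerate the types of root systems of rank $\leqslant 2$ and exhibit a realization for each one in a suitably chosen number field. Up to isomorphism the complete list is ${\sf A}_1,\,{\sf A}'_1$ in rank $1$, and ${\sf A}_2,\,{\sf B}_2,\,{\sf G}_2,\,{\sf BC}_2,\,2{\sf A}_1,\,2{\sf A}'_1,\,{\sf A}_1\stackrel{.}{+}{\sf A}'_1$ in rank $2$. For rank $1$, I would take $K=\mathbb Q$ and any nonzero integer $\alpha$: then $\{\pm\alpha\}$ realizes ${\sf A}_1$ and $\{\pm\alpha,\pm 2\alpha\}$ realizes ${\sf A}'_1$, since in both cases the Weyl group is $\langle{\rm mult}(-1)\rangle$, which lies tautologically in $\mathcal L(\mathbb Q)={\rm mult}(\mathbb Q^*)$.

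For rank $2$, the strategy is to use the two imaginary quadratic fields whose rings of integers are the hexagonal and square planar lattices, namely $K=\mathbb Q(\sqrt{-3})$ and $K=\mathbb Q(\sqrt{-1})$. The core technical device in either case is the explicit reflection formula $r_a={\rm mult}(-a\overline a^{-1})c\in\mathcal L(K)$, which is the orthogonal reflection in the line $\mathbb Q a$ with respect to the bilinear form \eqref{Eu}. The key observation is that whenever $-a\overline a^{-1}$ is a root of unity in $K$, $r_a$ preserves every norm-level set ${\mathscr O}_K(d)$, and hence belongs to $W({\mathscr O}_K(d))$. Every candidate realization below will therefore be built from such norm-level sets, on which this root-of-unity condition holds automatically.

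With this recipe, I would identify each rank-$2$ type as follows. Over $\mathbb Q(\sqrt{-3})$: the unit set ${\mathscr O}_K(1)=\mu_K$ is a regular hexagon and realizes ${\sf A}_2$; the scaled hexagon ${\mathscr O}_K(3)=(1+\omega){\mathscr O}_K(1)$ is another ${\sf A}_2$; and ${\mathscr O}_K(1)\cup{\mathscr O}_K(3)$ realizes ${\sf G}_2$. Over $\mathbb Q(\sqrt{-1})$: ${\mathscr O}_K(1)=\{\pm 1,\pm i\}$ realizes $2{\sf A}_1$; adjoining ${\mathscr O}_K(2)=(1+i){\mathscr O}_K(1)$ gives ${\sf B}_2$; further adjoining ${\mathscr O}_K(4)=2{\mathscr O}_K(1)$ gives ${\sf BC}_2$; and finally, the two remaining nonreduced reducible rank-$2$ types $2{\sf A}'_1$ and ${\sf A}_1\stackrel{.}{+}{\sf A}'_1$ are realized by ${\mathscr O}_K(1)\cup{\mathscr O}_K(4)$ and ${\mathscr O}_K(1)\cup\{\pm 2\}$ respectively.

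For each case the two verifications to carry out are (i) that the exhibited subset is really a root system of the claimed type, which amounts to checking angles and lengths of the listed generators in $\mathbb C$, and (ii) that for every element $a$ of that subset one has $-a\overline a^{-1}\in\mu_K$, so that $r_a\in\mathcal L(K)$ and $r_a$ preserves the whole subset. The principal obstacle here is not conceptual but organizational: one must establish the scalar-multiple identities ${\mathscr O}_K(3)=(1+\omega){\mathscr O}_K(1)$ over $\mathbb Q(\sqrt{-3})$ and ${\mathscr O}_K(2)=(1+i){\mathscr O}_K(1)$, ${\mathscr O}_K(4)=2{\mathscr O}_K(1)$ over $\mathbb Q(\sqrt{-1})$, so that the unions above have the right cardinalities and the correct root-length ratios. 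Since all of these verifications appear explicitly in the examples preceding the statement, the proposition follows by collecting them.
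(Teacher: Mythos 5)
Your proposal is correct and follows essentially the same route as the paper: the paper's proof of this proposition consists precisely of the examples in Section \ref{s1}, using the same enumeration of types, the same fields $\mathbb Q$, $\mathbb Q(\sqrt{-3})$, $\mathbb Q(\sqrt{-1})$, the same reflection formula \eqref{refl}, and the same norm-level sets ${\mathscr O}_K(d)$ and their unions.
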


 \section{Group $\mathcal L(K)$ and its finite subgroups}

Below $K$ is a number field of degree $n$ over $\mathbb Q$. Given $g\in {\mathcal L}(K)$, we denote
\begin{equation*}
K^g:=\{a\in K\mid g(a)=a\}.
\end{equation*}
If $g\in  {\rm Aut}(K)$, then $K^g$ is a subfield of $K$ and $K/K^g$ is a degree
${\rm ord}(g)$ Galois extension with the Galois group $\langle g\rangle$.

\begin{lemma}
\label{keyK} \
\begin{asparaitem}\itemsep 1ex
\item[\rm(a)] The group ${\mathcal L}(K)$ is a semidirect product of
its normal subgroup ${\rm mult}(K^*)$ and the subgroup
${\rm Aut} (K)$.\;Therewith,
\begin{equation}\label{conj}
g\,{\rm mult}(a)\,g^{-1}={\rm mult}(g(a)\!)\quad
\mbox{for any $a\in K^*$, $g\in {\rm Aut} (K)$.}
\end{equation}
\item[\rm(b)]
Let $g\in  {\rm Aut}(K)$ and let
$${\rm Norm}_{K/K^{g}}\colon K\to K^g$$ be the norm map corresponding to the field extension $K/K^g$.\;Then for any $a\in K^*$,
 the following properties are equivalent:
    \begin{enumerate}[\hskip 9.9mm \rm(i)]
    \item[$({\rm b}_1)$] the element ${\rm mult}(a)g\in {\mathcal L}(K)$ has finite order;
    \item[$({\rm b}_2)$]
        ${\rm Norm}_{K/K^{g}}(a)$ is a root of unity in $K^g$.
    \end{enumerate}
\end{asparaitem}
\end{lemma}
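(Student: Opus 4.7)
The plan for part (a) is to first check the conjugation identity \eqref{conj} by direct action on an arbitrary $x\in K$: one has $(g\,{\rm mult}(a)\,g^{-1})(x)=g(a\cdot g^{-1}(x))=g(a)\,x={\rm mult}(g(a))(x)$, since $g$ is a ring homomorphism of $K$. This single formula accomplishes two things at once. It shows that ${\rm Aut}(K)$ normalizes ${\rm mult}(K^*)$ in ${\rm GL}_{\mathbb Q}(K)$, and it implies that any product ${\rm mult}(a_1)h_1{\rm mult}(a_2)h_2\cdots$ can be rewritten as ${\rm mult}(b)h$ with $b\in K^*$, $h\in{\rm Aut}(K)$; hence ${\mathcal L}(K)={\rm mult}(K^*)\cdot{\rm Aut}(K)$. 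To promote this to a semidirect product decomposition it remains to observe that ${\rm mult}(K^*)\cap{\rm Aut}(K)=\{{\rm id}\}$: if ${\rm mult}(a)\in{\rm Aut}(K)$, evaluating at $x=1$ gives $a={\rm mult}(a)(1)=1$. Normality of ${\rm mult}(K^*)$ follows because it is the kernel of the projection ${\mathcal L}(K)\to{\rm Aut}(K)$ read off from this decomposition.

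For part (b), the plan is to obtain an explicit formula for the powers of ${\rm mult}(a)g$ and then analyze when they equal the identity. Using \eqref{conj} one proves by induction on $k\geqslant 1$ the identity
\begin{equation*}
({\rm mult}(a)\,g)^k={\rm mult}\bigl(a\,g(a)\,g^2(a)\cdots g^{k-1}(a)\bigr)\,g^k.
\end{equation*}
Let $m:={\rm ord}(g)$, which is finite since the automorphism group of a number field is finite. Because the decomposition in part (a) is a semidirect product, $({\rm mult}(a)g)^k={\rm id}$ forces the ${\rm Aut}(K)$-component $g^k$ to be trivial, hence $m\mid k$. So ${\rm mult}(a)g$ has finite order if and only if $({\rm mult}(a)g)^{jm}={\rm id}$ for some positive integer $j$.

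To finish, write $k=jm$ and use $g^{im+r}=g^r$ to telescope the product into blocks of length $m$:
\begin{equation*}
\prod_{i=0}^{jm-1}g^i(a)=\Bigl(\prod_{i=0}^{m-1}g^i(a)\Bigr)^{\!j}={\rm Norm}_{K/K^g}(a)^j,
\end{equation*}
where the last equality uses that $K/K^g$ is Galois with group $\langle g\rangle$ of order $m$. Combining with the formula above gives $({\rm mult}(a)g)^{jm}={\rm mult}\bigl({\rm Norm}_{K/K^g}(a)^j\bigr)$, which is the identity in ${\mathcal L}(K)$ if and only if ${\rm Norm}_{K/K^g}(a)^j=1$ in $K^g$. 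Therefore ${\rm mult}(a)g$ has finite order if and only if some positive power of ${\rm Norm}_{K/K^g}(a)$ equals $1$, that is, if and only if ${\rm Norm}_{K/K^g}(a)$ is a root of unity in $K^g$. There is no genuine obstacle in the argument; the only care point is the bookkeeping in the inductive power formula and keeping track that the exponent must be a multiple of $m$ before the norm identification becomes valid.
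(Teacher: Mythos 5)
Your proof is correct and follows essentially the same route as the paper: establish the product/conjugation formula in ${\mathcal L}(K)$, use evaluation at $1$ to get uniqueness of the decomposition ${\rm mult}(a)g$, and then compute powers of ${\rm mult}(a)g$ to identify $({\rm mult}(a)g)^{m}$ with ${\rm mult}\bigl({\rm Norm}_{K/K^g}(a)\bigr)$. The paper shortcuts your telescoping step by simply noting that an element has finite order if and only if its $m$-th power does, but this is only a cosmetic difference.
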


\begin{proof}
(a) First, check that the set of all products $\mult(a)g$,
where $a\in K^*$, $g\in {\rm Aut}(K)$, is a subgroup of
$\GL_{\Q}(K)$.\;Let $a_1, a_2\in K^{*}$ and $g_1, g_2\in
\Aut(K)$.\;Then, for each $v \in K$,
$$\mult(a_1)g_1\mult(a_2)g_2 (v)=  a_1 (g_1 (a_2 g_2 (v)\!)\!)=
a_1 (g_1 (a_2)\!) (g_1 (g_2 (v)\!)\!).$$
This yields
\begin{equation}
\label{productK}
\mult(a_1)g_1\mult(a_2)g_2=\mult(a_1 (g_1 (a_2)\!)\!) g_1 g_2.
\end{equation}
From \eqref{productK} we infer  that the inverse of
$\mult(a)g$ is $\mult(g^{-1}(a^{-1})\!)g^{-1}$. Thus
the set of all products $\mult(a)g$ is a subgroup of $\GL_{\Q}(K)$.

On the other hand,
$$\mult(a)g (1)
={\rm mult}(a) (g(1))={\rm mult}(a) (1)
=a,$$

\noindent hence the linear operator $\mult(a)g$ uniquely
determines $a$, and there\-fo\-re, $g$ as well.  This
implies that the map
\begin{equation}\label{psi}
\psi\colon \mathcal{L}(K) \to \Aut(K), \quad \mult(a)g \mapsto g,
\end{equation}
is well defined.\;By
\eqref{productK}, the map \eqref{psi} is a
group epimorphism and
\begin{equation}\label{kerpsi}
\ker(\psi)=\mult(K^{*}).
\end{equation}
Finally, \eqref{conj}  straightforwardly follows
from \eqref{productK}.

(b) Let $m:={\rm ord}(g)$.\;Then it follows from \eqref{productK} that
\begin{equation}\label{fo}
({\rm mult}(a)g)^m=\textstyle{\rm mult}\Big(\prod_{d=0}^{m-1}g^d(a)\Big)g^m={\rm mult} \Big(\prod_{d=0}^{m-1}g^d(a)\Big).
\end{equation}
In turn, \eqref{fo} implies that $({\rm b}_1)$ holds if and only if
the right-hand side of \eqref{fo} is a root of unity.\;But this right-hand side is
${\rm Norm}_{K/K^{g}}(a)$.
\end{proof}

\begin{remark} {\rm The examples below show that, given $a\in K^*$, $g\in {\rm Aut}(K)$,
the element ${\rm mult}(a)  g\in {\mathcal L}(K)$ may have a finite order while $a$ is not a root of unity (i.e., its order is infinite).
\begin{exsnonumber}
1. Let $K=\mathbb Q(\sqrt{2})$.\;Then $|{\rm Aut}(K)|=2$. Let $g\in {\rm Aut}(K)$, $g\neq {\rm id}_K$, and $a=1+\sqrt{2}$.\;Then ${\rm ord}({\rm mult}(a)g)=4$, ${\rm ord}(a)=\infty$.

2. Let  $K={\mathbb Q}(\sqrt{-1})$.\;Then $|{\rm Aut}(K)|=2$. Let $g\in {\rm Aut}(K)$, $g\neq {\rm id}_K$, and $a=(3+4\sqrt{-1})/5$.\;Then ${\rm ord}({\rm mult}(a)g)=2$, ${\rm ord}(a)=\infty$.
\end{exsnonumber}
}
\end{remark}

\begin{lemma}
\label{key}
For any finite subgroup $G$ of $\mathcal L(K)$,
there is a {\rm(}cyclic\,{\rm)} subgroup $H$ of $\mu_K$ such that ${\rm mult}(H)\subseteq G$ and
\begin{enumerate}[\hskip 3.2mm\rm(i)]
\item  the sequence
$1\to H\xrightarrow{{\rm mult}} G\xrightarrow{\psi}\psi(G)\to 1$
is exact;
\item $|G|=|H|\!\cdot\!|\psi(G)|$;
\item $|H|$ divides $|\mu_K|$;
\item $\varphi(|H|)$ divides $n$;
\item $|\psi(G)|$ divides $|\Aut(K)|$, which divides $n$;
\item if $p\geqslant 2$ is a prime integer, then
$\nu_p(|G|)\leqslant 2\nu_p(n)+1$;
\item $|G|$ divides $n\exp(G)$.
\end{enumerate}
\end{lemma}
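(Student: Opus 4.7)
The plan is to exploit the semidirect product decomposition $\mathcal L(K)=\mathrm{mult}(K^*)\rtimes\mathrm{Aut}(K)$ established in Lemma \ref{keyK}(a), and to extract $H$ as the ``multiplicative part'' of $G$. Specifically, I would set $\mathrm{mult}(H):=G\cap\ker(\psi)=G\cap\mathrm{mult}(K^*)$. Since every element of the finite group $G$ has finite order and since $\mathrm{mult}(a)$ has finite order exactly when $a\in\mu_K$, the group $H$ is a finite subgroup of $K^*$ contained in $\mu_K$, and is therefore cyclic. The restriction of $\psi$ to $G$ then immediately yields the short exact sequence in (i), and (ii) follows from (i) by Lagrange. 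Claim (iii) is immediate from $H\subseteq\mu_K$.

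For (iv), a generator of $H$ is a primitive $|H|$-th root of unity lying in $K$, so $K\supseteq\mathbb Q(\zeta_{|H|})$, whence $\varphi(|H|)=[\mathbb Q(\zeta_{|H|}):\mathbb Q]$ divides $[K:\mathbb Q]=n$. For (v), $\psi(G)$ is a subgroup of $\mathrm{Aut}(K)$, so $|\psi(G)|$ divides $|\mathrm{Aut}(K)|$; and the identity $|\mathrm{Aut}(K)|=[K:K^{\mathrm{Aut}(K)}]$ applied in the tower $\mathbb Q\subseteq K^{\mathrm{Aut}(K)}\subseteq K$ gives $|\mathrm{Aut}(K)|\mid n$.

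For (vi), the only part with real arithmetic content, I would combine (ii), (iv), (v) with the elementary totient inequality $\nu_p(m)\leqslant\nu_p(\varphi(m))+1$, valid for every prime $p$ and every positive integer $m$ and visible from $\varphi(p^e)=p^{e-1}(p-1)$. This yields $\nu_p(|H|)\leqslant\nu_p(\varphi(|H|))+1\leqslant\nu_p(n)+1$ and $\nu_p(|\psi(G)|)\leqslant\nu_p(n)$, whose sum is $2\nu_p(n)+1$. Finally, (vii) follows from (ii), (v), and the cyclicity of $H$: the integer $|H|$ is the order of a generator of $H\leqslant G$, so $|H|$ divides $\exp(G)$, while $|\psi(G)|$ divides $n$, and hence $|G|=|H|\cdot|\psi(G)|$ divides $n\exp(G)$.

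I do not expect a substantive obstacle: the lemma is a structural consequence of Lemma \ref{keyK}(a) together with standard facts about cyclotomic subfields of a number field and the fixed-field identity for $\mathrm{Aut}(K)$. The one point demanding a moment of care is the totient inequality underlying (vi), but even this is elementary once the prime-by-prime factorization of $\varphi$ is written out.
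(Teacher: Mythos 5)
Your proof is correct and follows essentially the same route as the paper: $H$ is extracted as the preimage under $\mathrm{mult}$ of $G\cap\ker(\psi)$, which lies in $\mu_K$ because the elements of $G$ have finite order, and (iv), (vii) use the cyclotomic degree $\varphi(|H|)=[\mathbb Q(\zeta_{|H|}):\mathbb Q]$ and the fact that a generator of $\mathrm{mult}(H)$ is an element of $G$ of order $|H|$. The only cosmetic difference is in (vi): the paper bounds $\nu_p(|\mu_K|)\leqslant\nu_p(n)+1$ via a primitive $p^{\nu_p(|\mu_K|)}$-th root of unity in $K$ and then uses (iii), whereas you apply the equivalent totient inequality $\nu_p(m)\leqslant\nu_p(\varphi(m))+1$ directly to $m=|H|$ together with (iv) --- the same cyclotomic-degree argument in a slightly different packaging.
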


\begin{proof} Since $\mu_K$ is the set of all elements
of finite order in $K^*$, and \eqref{mult} is a group
monomorphism, the existence of $H$ and (i) follow from \eqref{kerpsi}.
Since $\mu_K$ is cyclic, $H$ is cyclic as well.

Statements (ii), (iii), (v) are clear.

Let $\theta$ be a generator of the cyclic group $H$.\;Then
$[\mathbb Q(\theta):\mathbb Q]=\varphi({\rm ord}(\theta))=
\varphi(|H|)$. Whence (iv), because $\mathbb Q(\theta)$
is a subfield of\;$K$.

Since, in the above notation, ${\rm ord}(\theta)=|H|$, the definition of
$\exp(G)$ implies that $|H|$ divides $\exp(G)$. In view of (ii), this and
(v) yield\;(vii).

Let $\nu_p(|\mu_K|)=d$ and let $\zeta\in \mu_K$ be a
primitive $p^d$th root of unity. From
$\mathbb Q(\zeta)\subseteq K$ and $[\mathbb Q(\zeta):
\mathbb Q]=\varphi(p^d)=p^{d-1}(p-1)$ we infer that
$p^{d-1}(p-1)$ divides $n$. Hence $\nu_p(n)\geqslant d-1=
\nu_p(|\mu_K|)-1$.\;This and (ii), (iii), (v)  then imply $\nu_p(|G|)\leqslant \nu_p(|\mu_K|)+\nu_p(n)\leqslant 2\nu_p(n)+1$, which proves (vi).
\end{proof}

 For further proofs, Table 1 below summarizes some information  from \cite{Bo68} about the Weyl groups of irre\-ducible root systems $R$ of type $\sf R$:

\eject

{\fontsize{10pt}{5mm}\selectfont
\begin{center}{\fontsize{11pt}{6mm}\selectfont{\sc Table 1}}
\\[3mm]
\begin{tabular}{c||c|c|c|c}

$\sf R$
&
$\sf A_\ell$, {\fontsize{10pt}{6mm}\selectfont $\ell\!\geqslant \!1$}
&
$\sf B_\ell$, {\fontsize{10pt}{6mm}\selectfont $\ell\!\geqslant\! 2$}
&
$\sf C_\ell$, {\fontsize{10pt}{6mm}\selectfont $\ell\!\geqslant\! 2$}
&
$\sf D_\ell$, {\fontsize{10pt}{6mm}\selectfont $\ell\!\geqslant\! 4$}
\\
\hline
&&&&\\[-13pt]
$|W(R)|$
&
$(\ell+1)!$
&
$2^\ell\!\cdot\!\ell!$
&
$2^\ell\!\cdot\!\ell!$
&
$2^{\ell-1}\!\cdot\!\ell!$
\\
\end{tabular}
\end{center}
}

\

\vskip -10mm

\

{
\fontsize{10pt}{5mm}\selectfont
\begin{center}{\ }

\begin{tabular}{c||c|c|c|c|c}

$\sf R$
&
$\sf E_6$
&
$\sf E_7$
&
$\sf E_8$
&
$\sf F_4$
&
$\sf G_2$
\\
\hline
&&&&&\\[-13pt]
$|W(R)|$
&
$2^7\!\cdot\!3^4\!\cdot\!5$
&
$2^{10}\!\cdot\!3^4\!\cdot\!5\!\cdot\!7$
&
$2^{14}\!\cdot\!3^5\!\cdot\!5^2\!\cdot\!7$
&
$2^7\!\cdot\!3^2$
&
$2^2\!\cdot\!3$
\\
\end{tabular}
\end{center}
}

\

\vskip -10mm

\

{
\fontsize{10pt}{5mm}\selectfont
\begin{center}{\ }

\begin{tabular}{c||c|c|c|c}
$\sf R$ & ${\sf A}_1$ &${\sf A}_2$ & ${\sf B}_2={\sf C}_2$ &  ${\sf G}_2$
\\
\hline
&&&&\\[-13pt]
$\exp(W(R))$ & 2 & 6 & 4 & 6
\\
\end{tabular}
\end{center}
}

\vskip 4mm

\begin{lemma}\label{W>}
Let $W(R)$ be the Weyl group of a root system $R$.\;Then
\begin{equation}\label{nu2}
\nu_2(|W(R)|)\geqslant \left [
{({\rm rk}(R)
+1)}/{2}\right].
\end{equation}
\end{lemma}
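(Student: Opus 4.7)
The plan is to reduce to the irreducible case and then verify the inequality type by type using Table 1, exploiting the fact that $\nu_2$ is additive under direct sums while the ceiling function is superadditive.

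First I would set up the reduction. Write $R = R_1 \sqcup \cdots \sqcup R_k$ as a disjoint union of irreducible root systems of ranks $n_1, \ldots, n_k$, so that $W(R) = W(R_1) \times \cdots \times W(R_k)$ and $n := \mathrm{rk}(R) = n_1 + \cdots + n_k$. Then
\[
\nu_2(|W(R)|) = \sum_{i=1}^{k}\nu_2(|W(R_i)|),
\]
while on the right-hand side of \eqref{nu2} one has $[(n+1)/2] = \lceil n/2\rceil \le \sum_{i=1}^{k}\lceil n_i/2\rceil$ by superadditivity of $\lceil\cdot/2\rceil$ (at most one of the $n_i$ is odd without pairing with another). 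Consequently, it suffices to prove $\nu_2(|W(R_i)|) \ge \lceil n_i/2\rceil$ for each irreducible summand.

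Next I would run through the irreducible types. From Table 1:
\begin{itemize}
\item For $\mathsf{A}_\ell$: $\nu_2((\ell+1)!) \ge [(\ell+1)/2] = \lceil\ell/2\rceil$, directly from Legendre's formula.
\item For $\mathsf{B}_\ell$, $\mathsf{C}_\ell$: $\nu_2(2^\ell\ell!) \ge \ell \ge \lceil\ell/2\rceil$.
\item For $\mathsf{D}_\ell$ with $\ell\ge4$: $\nu_2(2^{\ell-1}\ell!) \ge \ell-1 \ge \lceil\ell/2\rceil$.
\item For the exceptional types one reads off $\nu_2(|W(\mathsf{E}_6)|)=7\ge3$, $\nu_2(|W(\mathsf{E}_7)|)=10\ge4$, $\nu_2(|W(\mathsf{E}_8)|)=14\ge4$, $\nu_2(|W(\mathsf{F}_4)|)=7\ge2$, $\nu_2(|W(\mathsf{G}_2)|)=2\ge1$.
\end{itemize}
Finally, for completeness one has to cover the non-reduced irreducible types, namely $\mathsf{A}'_1$ (with Weyl group of order $2$) and $\mathsf{BC}_\ell$ for $\ell\ge 2$ (whose Weyl group equals that of $\mathsf{B}_\ell$, hence of order $2^\ell\ell!$); both satisfy the desired bound by the same estimates as above.

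There is no genuine obstacle here: the argument is the elementary combination of a reducibility reduction with the table of Weyl group orders. The only point demanding a small care is verifying that $\lceil n/2\rceil\le\sum\lceil n_i/2\rceil$, which follows at once because the fractional deficit $\lceil n_i/2\rceil - n_i/2\in\{0,1/2\}$ is nonnegative.
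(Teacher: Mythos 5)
Your proof is correct and follows essentially the same route as the paper's: reduce to irreducible summands via $\nu_2(|W(R)|)=\sum\nu_2(|W(R_i)|)$ together with the superadditivity of $\lceil\cdot/2\rceil$ (which is exactly the paper's inequality $[(a+b+1)/2]\leqslant[(a+1)/2]+[(b+1)/2]$, since $[(m+1)/2]=\lceil m/2\rceil$), then check each irreducible type against Table 1 using Legendre's formula for the classical series. Your explicit handling of the non-reduced types ${\sf A}'_1$ and ${\sf BC}_\ell$ is a small completeness bonus the paper leaves implicit.
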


\begin{proof} First, by Legendre's formula (see, e.g., \cite[Thm.\;2.6.1\;]{Mo12}),
we have
$\nu_2(m!)= \sum_{k=1}^{\infty}\left[m/2^k\right]\geqslant [m/2]+[m/4]$, which readily yields
\begin{equation}\label{m!}
\nu_2(m!)\geqslant \left[(m+1)/2\right]\quad\mbox{for $m\neq 1, 3$.}
\end{equation}

Next, suppose that $R$ is irreducible of type $\sf R$.\;Then
\eqref{nu2} directly follows from
Table 1 and \eqref{m!}.

In the case of an arbitrary root system
\begin{equation}\label{sr}
R=R_1\stackrel{.}{+}\cdots\stackrel{.}{+}R_{d},
\end{equation}
where $R_i$ is an irreducible root system for every $i$,
we have
\begin{equation}\label{sura}
{\rm rk}(R)={\rm rk}(R_1)+\cdots + {\rm rk}(R_d)
\end{equation}
and $W(R)$ splits into the product
\begin{equation}\label{sW}
W(R)=W(R_1)\times\cdots\times W(R_d)
\end{equation}
where $W(R_i)$ is the Weyl group of
$R_i$.\;It follows from \eqref{sW} that, for every
prime integer $p\geqslant 2$,
\begin{equation}\label{nuW}
\nu_p(|W(R)|)=\sum_{i=1}^d  \nu_p(|W(R_i)|).
\end{equation}

Given
that for every $W(R_i)$ the desired inequa\-lity is proved,
we then deduce from \eqref{nuW} that
\begin{equation}\label{Wi}
\nu_2(|W(R)|)
\geqslant  \sum_{i=1}^d  \left[({\rm rk}(R_i)+1)/{2}\right].
\end{equation}

Now \eqref{nu2} follows from \eqref{Wi} because of the inequality
\begin{equation}
\label{inab}
 \left[(a+b+1)/{2}\right] \leqslant   \left[(a+1)/{2}\right]+
 \left[(b+1)/{2}\right]\!,
\end{equation}
which holds for all integers $a$ and $b$.\;To prove \eqref{inab},
note that if we replace $a$ by $a+2$, then  the both
sides of  \eqref{inab} would increase by $1$.\;So
validity of \eqref{inab} for any $a$ and $b$ follows from its evident validity
for $a=0$ and $a=1$.
\end{proof}

Below some of the arguments are based on the information
that readily follows from Table 1.\;It is convenient to
collect it in Table\;2, where we use the same notation
as in Table 1 and,  for every prime integer $p\geqslant 2$, put
$\nu_p({\sf R}):=\nu_p(|W(R)|)$.

{\fontsize{10pt}{5mm}\selectfont
\begin{center}{\fontsize{11pt}{6mm}\selectfont{\sc Table 2}}
\\[1mm]
${\sf R}={\sf A}_{\ell}$
\\[1mm]
\begin{tabular}{c||c|c|c|c|c|c|c|c|c|c|c|c|c|c|c|c}
$\ell$
&
$1$
&
$2$
&
$3$
&
$4$
&
$5$
&
$6$
&
$7$
&
$8$
&
$9$
&
${10}$
&
${11}$
&
${12}$
&
${13}$
&
${14}$
&
${15}$
&
${16}$
\\
\hline
&&&&\\[-13pt]
$\nu_2({\sf R})$
&
1
&
1
&
3
&
3
&
4
&
4
&
7
&
7
&
8
&
8
&
10
&
10
&
11
&
11
&
15
&
15
\\
\hline
&&&&\\[-13pt]
$\nu_3({\sf R})$
&
0
&
1
&
1
&
1
&
2
&
2
&
2
&
4
&
4
&
4
&
5
&
5
&
5
&
6
&
6
&
6
\\
\end{tabular}
\end{center}
}

\

\vskip -9mm

\

{\fontsize{10pt}{5mm}\selectfont
\begin{center}{\ }
\\[-2mm]
${\sf R}={\sf B}_{\ell}\;\;\mbox{\rm and}\;\;{\sf C}_{\ell} $
\\[1mm]
\begin{tabular}{c||c|c|c|c|c|c|c|c|c|c|c|c|c|c|c}
$\ell$
&
$2$
&
$3$
&
$4$
&
$5$
&
$6$
&
$7$
&
$8$
&
$9$
&
${10}$
&
${11}$
&
${12}$
&
${13}$
&
${14}$
&
${15}$
&
${16}$
\\
\hline
&&&&\\[-13pt]
$\nu_2({\sf R})$
&
3
&
4
&
7
&
8
&
10
&
11
&
15
&
16
&
18
&
19
&
22
&
23
&
25
&
26
&
31
\\
\hline
&&&&\\[-13pt]
$\nu_3({\sf R})$
&
0
&
1
&
1
&
1
&
2
&
2
&
2
&
4
&
4
&
4
&
5
&
5
&
5
&
6
&
6
\\
\end{tabular}
\end{center}
}

\

\vskip -9mm

\

{\fontsize{10pt}{5mm}\selectfont
\begin{center}{\ }
\\[-2mm]
${\sf R}={\sf D}_{\ell}$
\\[1mm]
\begin{tabular}{c||c|c|c|c|c|c|c|c|c|c|c|c|c}
$\ell$
&
$4$
&
$5$
&
$6$
&
$7$
&
$8$
&
$9$
&
$10$
&
$11$
&
${12}$
&
${13}$
&
${14}$
&
${15}$
&
${16}$
\\
\hline
&&&&\\[-13pt]
$\nu_2({\sf R})$
&
6
&
7
&
9
&
10
&
14
&
15
&
17
&
18
&
21
&
22
&
24
&
25
&
30
\\
\hline
&&&&\\[-13pt]
$\nu_3({\sf R})$
&
1
&
1
&
2
&
2
&
2
&
4
&
4
&
4
&
5
&
5
&
5
&
6
&
6
\\
\end{tabular}
\end{center}
}

\

\vskip -11mm

\

{\fontsize{10pt}{5mm}\selectfont
\begin{center}{\ }
\\[3mm]
\begin{tabular}{c||c|c|c|c|c}
$\sf R$
&
${\sf E}_6$
&
${\sf E}_7$
&
${\sf E}_8$
&
${\sf F}_4$
&
${\sf G}_2$
\\
\hline
&&&&\\[-13pt]
$\nu_2({\sf R})$
&
7
&
10
&
14
&
7
&
2 
\\
\hline
&&&&\\[-13pt]
$\nu_3({\sf R})$
&
4
&
4
&
5
&
2
&
1
\\
\end{tabular}
\end{center}
}

\vskip 1mm

Below, for every type ${\sf R}$ of root systems,
we put $\varnothing \stackrel{.}+{\sf R}:={\sf R}$,
$0{\sf R}=\varnothing$,
and,
by definition, ${\rm rk}(\varnothing)=
 \nu_p(\varnothing)=0$ for any $p$.

\begin{proposition}\label{l3}  Let $R$ be a reduced root system
of type ${\sf R}$.

\begin{enumerate}[\hskip 2.7mm\rm(i)]
\item If $\;{\sf R}={\sf S}_1\stackrel{.}+{\sf S}_2$,
then
$$\nu_2({\sf S}_1) \leqslant \nu_2({\sf R})-
[({\rm rk}({\sf S}_2)+1)/2].
$$
In particular, if $\;{\sf R}_i$ is the type of $R_i$ in
{\rm \eqref{sr}}, then
\begin{equation*}
\nu_2({\sf R}_i)  \leqslant  \nu_2({\sf R}) -
[(n-{\rm rk}({\sf R}_i)+1)/2]  < \nu_2({\sf R}).
\end{equation*}

\item  If  $\nu_2({\sf R}) \leqslant 3$, then
\begin{gather*}
 {\sf R}=a_1{\sf A}_1\stackrel{.}+a_2{\sf A}_2
 \stackrel{.}+a_3{\sf A}_3\stackrel{.}+a_4{\sf A}_4
 \stackrel{.}+b_2{\sf B}_2
 \stackrel{.}+g_2{\sf G}_2,\\
 a_1+2a_2+3a_3+4a_4+2b_2
 +2g_2={\rm rk}({\sf R}),\\
 a_1+a_2+3a_3+3a_4+3b_2
 +2g_2=\nu_2({\sf R}),\\
 a_2+a_3+a_4
 +g_2=\nu_3({\sf R}).
 \end{gather*}
\item
 If  $\nu_3({\sf R}) \leqslant 1$, then
 \begin{gather}
 {\sf R}= {\sf X}\stackrel{.}+a_1{\sf A}_1\stackrel{.}+
 b_2{\sf B}_2,\;\;\mbox{\rm where}\label{nu31}\\
  {\sf X}\in \{{\sf A}_2, {\sf A}_3, {\sf A}_4,  {\sf B}_3,
  {\sf B}_4,   {\sf B}_5, {\sf C}_3, {\sf C}_ 4, {\sf C}_ 5,
  {\sf D}_4, {\sf D}_5, {\sf G}_2, \varnothing\},\label{nu32}\\
 {\rm rk}({\sf X})+a_1+2b_2={\rm rk}({\sf R}),\notag\\
 \nu_2({\sf X})+a_1+3b_2=\nu_2({\sf R}).\notag
 \end{gather}
 In this case, $W(R)$ contains a subgroup isomorphic to
 the Weyl group of a root system
of type $m{\sf A}_1$,
where
 \begin{equation*}
m= \begin{cases}
{\rm rk}({\sf R})&\hskip -2mm\mbox{if $\;{\sf X}={\sf B}_3,
  {\sf B}_4,   {\sf B}_5, {\sf C}_3, {\sf C}_ 4, {\sf C}_ 5,
  {\sf D}_4, {\sf G}_2, \varnothing
$},\\
{\rm rk}({\sf R})-1 &  \hskip -2mm\mbox{if $\;{\sf X}={\sf A}_2, {\sf A}_3, {\sf D}_5$},\\
{\rm rk}({\sf R})-2 &\hskip -2mm\mbox{if $\;{\sf X}= {\sf A}_4$}.
 \end{cases}
\end{equation*}
 \end{enumerate}
\end{proposition}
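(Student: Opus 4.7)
The plan is to handle (i)--(iii) in order, relying throughout on the product decomposition \eqref{sW} and the resulting identity \eqref{nuW}, together with the rank bound of Lemma \ref{W>}. For (i), write ${\sf R}={\sf S}_1\stackrel{.}{+}{\sf S}_2$, use \eqref{nuW} to split $\nu_2({\sf R})=\nu_2({\sf S}_1)+\nu_2({\sf S}_2)$, and apply Lemma \ref{W>} to ${\sf S}_2$; rearranging yields the main inequality. The ``in particular'' clause specializes ${\sf S}_1={\sf R}_i$ with ${\sf S}_2$ being the sum of the other ${\sf R}_j$; when $R$ is reducible one has ${\rm rk}({\sf S}_2)\geqslant 1$, forcing the strict inequality.

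For (ii), (i) shows that each irreducible component ${\sf R}_i$ satisfies $\nu_2({\sf R}_i)\leqslant \nu_2({\sf R})\leqslant 3$. A scan of Table 2 makes clear that the only irreducible types with $\nu_2\leqslant 3$ are ${\sf A}_1,{\sf A}_2,{\sf A}_3,{\sf A}_4,{\sf B}_2,{\sf G}_2$, since all other classical types of rank $\geqslant 3$ and all exceptional types except ${\sf G}_2$ have $\nu_2\geqslant 4$. Summing the entries of Table 2 weighted by the multiplicities $a_1,a_2,a_3,a_4,b_2,g_2$ then gives the three linear identities for ${\rm rk}({\sf R}),\nu_2({\sf R}),\nu_3({\sf R})$.

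For (iii), the same argument with $p=3$ yields $\nu_3({\sf R}_i)\leqslant 1$ for each $i$. Table 2 shows that the irreducibles with $\nu_3=0$ are exactly ${\sf A}_1$ and ${\sf B}_2={\sf C}_2$, and those with $\nu_3=1$ are precisely the twelve types in \eqref{nu32}; since $\sum_i\nu_3({\sf R}_i)\leqslant 1$, at most one component ${\sf X}$ has $\nu_3=1$, producing the decomposition \eqref{nu31}. The rank and $\nu_2$ identities again follow by linear summation, using $\nu_2({\sf A}_1)=1$ and $\nu_2({\sf B}_2)=3$.

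For the final subgroup statement, I will use the factorization $W(R)\supseteq W({\sf X})\times W({\sf A}_1)^{a_1}\times W({\sf B}_2)^{b_2}$ together with the fact that $W({\sf B}_2)$ contains a Klein four-group (the reflections in two mutually orthogonal roots of ${\sf B}_2$). It then suffices to realize $(\mathbb Z/2)^{m_{\sf X}}\subseteq W({\sf X})$, since the value $m=m_{\sf X}+a_1+2b_2=m_{\sf X}+{\rm rk}({\sf R})-{\rm rk}({\sf X})$ then reproduces the case-by-case formula after substituting ${\rm rk}({\sf X})$. The cleanest route is to exhibit explicit orthogonal sub-root-systems $m_{\sf X}{\sf A}_1\subseteq {\sf X}$: one root for ${\sf A}_2$; the pair $\pm(e_1-e_2),\pm(e_3-e_4)$ inside ${\sf A}_3$ or ${\sf A}_4$; the full set $\pm e_1,\ldots,\pm e_\ell$ in ${\sf B}_\ell={\sf C}_\ell$ for $\ell\in\{3,4,5\}$; the four roots $\pm(e_1\pm e_2),\pm(e_3\pm e_4)$ in ${\sf D}_4$ or ${\sf D}_5$; and a long--short orthogonal pair in ${\sf G}_2$. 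The main subtlety is verifying sharpness in the cases yielding ${\rm rk}({\sf R})-1$ or ${\rm rk}({\sf R})-2$: for ${\sf A}_4$ one checks that no three mutually orthogonal roots exist (a dimension-count on the supports of the roots $e_i-e_j$ suffices), and similarly that ${\sf D}_5$ admits no five mutually orthogonal roots, each root occupying two coordinates. Apart from this combinatorial check, the rest is routine bookkeeping against Table 2.
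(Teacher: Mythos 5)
Your proposal is correct and takes essentially the same route as the paper: parts (i)--(iii) via Lemma \ref{W>}, the additivity \eqref{sura}--\eqref{nuW}, and a scan of Table 2, and the final claim of (iii) via commuting reflections in mutually orthogonal roots (where the paper cites Agaoka--Kaneda and closed subsystems, you simply write the roots out explicitly). The only substantive remark is that the ``sharpness'' verification you single out as the main subtlety is not needed, since the statement only asserts the \emph{existence} of a subgroup isomorphic to $W(m{\sf A}_1)\cong(\mathbb Z/2)^m$ for the stated $m$, not that $m$ is maximal.
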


\begin{proof}
Combining  Lemma \ref{W>}, \eqref{sura}, \eqref{nuW}, and Table\;2 yields the proof of
all but the last claim of (iii).\;To prove the latter, we note that if $Y$ is a root system of type ${\sf Y}$, then, for ${\sf Y}={\sf A}_1$, ${\sf B}_{\ell}$, ${\sf C}_{\ell}$, ${\sf D}_{\ell}$ ($\ell$ even), and ${\sf G}_2$, there is a subset
of ${\rm rk}({\sf Y})$ strongly orthogonal (hence orthogonal) roots in $Y$ (see \cite[Thms.\;3.1, 5.1]{AK02}; cf.\;\cite[Chap.\;VI, \S\;1, Sect. 3, Cor. of Thm. 1, Exer.\;15]{Bo68}).\;The subgroup of $W(Y)$ generated by reflections corresponding
to these roots is isomorphic to the Weyl group of a root system of type ${\rm rk}({\sf Y}){\sf A}_1$.\;This proves the claim in the case ${\sf X}={\sf B}_3$,
  ${\sf B}_4$,   ${\sf B}_5$, ${\sf C}_3$, ${\sf C}_ 4$, ${\sf C}_ 5$,
  ${\sf D}_4$, ${\sf G}_2$, $\varnothing$.\;The root systems of types ${\sf A}_2$, ${\sf A}_3$, ${\sf A}_4$, and ${\sf D}_5$ clearly contain closed subsystems of  respectively types ${\sf A}_1$,
  ${\sf A}_1\stackrel{.}+{\sf A}_1$, ${\sf A}_1\stackrel{.}+{\sf A}_1$, and ${\sf D}_4$.\;This implies the claim in the remaining two cases.
\end{proof}

\begin{proposition}\label{mA1} Let $K$ be a number field of degree $n$ over $\mathbb Q$ and let $m$ be a positive integer.\;If
the group $\mathcal L(K)$ contains a subgroup $G$ isomorphic to the Weyl group
of a root system
of type $m{\sf A}_1$, then
the following hold:
\begin{enumerate}[\hskip 4.2mm \rm(a)]
\item $2^{m-1}$ divides $n$;
\item if $m=n$, then
$n=1$ or $2$;
\item if $m=n-1$, then
$n=2$ or $4$;
\item if $m=n-2$, then
$n=3$ or $4$.
\end{enumerate}
\end{proposition}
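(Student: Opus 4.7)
The plan is to exploit the fact that the Weyl group of a root system of type $m{\sf A}_1$ is an elementary abelian $2$-group.  Since $W({\sf A}_1)\cong\mathbb Z/2\mathbb Z$ and Weyl groups of direct sums decompose as direct products (as in \eqref{sW}), the hypothetical subgroup $G$ is isomorphic to $(\mathbb Z/2\mathbb Z)^m$, so $|G|=2^m$ and $\exp(G)=2$.  Every step of the proof will then reduce to applying Lemma \ref{key} to this very rigid $G$ and chasing divisibilities.

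For (a), I would apply Lemma \ref{key} to $G$, obtaining a cyclic subgroup $H\subseteq\mu_K$ with ${\rm mult}(H)\subseteq G$.  A generator $\theta$ of $H$ gives an element ${\rm mult}(\theta)\in G$ of order $|H|$, so $|H|$ divides $\exp(G)=2$; hence $|H|\in\{1,2\}$.  By parts (i) and (ii) of Lemma \ref{key} this yields $|\psi(G)|=2^m/|H|\in\{2^m,\,2^{m-1}\}$, and by part (v), $|\psi(G)|$ divides $n$.  In either case $2^{m-1}$ divides $n$, which is precisely (a).

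Statements (b), (c), (d) then follow from (a) by substituting $m=n$, $m=n-1$, $m=n-2$ respectively, and by inspecting when $2^{n-1}$, $2^{n-2}$, $2^{n-3}$ can divide $n$.  A routine induction gives $2^{n-1}>n$ for $n\geqslant 3$, $2^{n-2}>n$ for $n\geqslant 5$, and $2^{n-3}>n$ for $n\geqslant 6$; so in each case only finitely many small values of $n$ have to be checked by hand, keeping in mind the standing constraint $m\geqslant 1$ (which rules out $n=1$ in (c) and $n\leqslant 2$ in (d)).  This leaves exactly the lists $\{1,2\}$, $\{2,4\}$, and $\{3,4\}$ claimed in (b), (c), (d).

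No step seems to be a genuine obstacle: the whole argument is a direct exploitation of Lemma \ref{key} together with elementary estimates on powers of $2$.  The one place requiring any care is the observation that $|H|$ divides $\exp(G)$, which is where the hypothesis that $G$ is elementary abelian of exponent $2$ pins things down so tightly that (a) follows at once.
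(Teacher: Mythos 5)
Your proposal is correct and follows essentially the same route as the paper: the paper computes $|G|=2^m$, $\exp(G)=2$ and invokes Lemma \ref{key}(vii) ($|G|$ divides $n\exp(G)$) to get (a), whereas you re-derive that very statement in this special case from parts (i), (ii), (v) of Lemma \ref{key} together with the observation that $|H|$ divides $\exp(G)$ --- which is exactly how the paper proves (vii) in the first place. Your elementary checks deducing (b), (c), (d) from (a) match what the paper leaves implicit.
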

\begin{proof}
Table 1 and \eqref{sW} yield $\exp(G)=2$ and $|G|=2^m$.\;Hence, by Lem\-ma \ref{key}(vii),
$2^m$ divides $2n$, whence (a).\;In turn, (a) implies  (b), (c), and (d).
\end{proof}

\begin{proposition}\label{124} Let $K$ be a
number field of degree $n$ over $\mathbb Q$.\;If the
group $\mathcal L(K)$ contains a finite subgroup $G$
isomorphic to the Weyl group $W(R)$ of a root
system $R$ of type ${\sf R}$ and rank $n$, then $n \in\{1, 2, 4\}$.
\end{proposition}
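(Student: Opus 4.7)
The plan is to use the Sylow-type divisibility bounds from Lemma \ref{key} together with the structural classification of Proposition \ref{l3} to whittle the admissible values of $n$ down to $\{1,2,4\}$.\;Since a non-reduced root system of rank $n$ has the same Weyl group as some reduced root system of rank $n$, I may immediately assume that $R$ is reduced, so that Propositions \ref{l3} and \ref{mA1} become applicable.

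First I would combine Lemma \ref{key}(vi) at $p=2$ with Lemma \ref{W>} to get
\[
[(n+1)/2]\;\leqslant\;\nu_2(|W(R)|)\;\leqslant\; 2\nu_2(n)+1.
\]
Writing $n=2^a m$ with $m$ odd, this rules out every odd $n>1$ (the left side is then $\geqslant 2$ while the right side equals $1$), and for even $n$ reduces to $2^{a-1}m\leqslant 2a+1$.\;Running $a$ through $1,2,3,\ldots$ leaves only the finite candidate list $n\in\{1,2,4,6,8,16\}$.

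Next I would dispose of $n=6$.\;The bound above gives $\nu_2(|W(R)|)\leqslant 3$, so Proposition \ref{l3}(ii) applies; its linear system, intersected with ${\rm rk}({\sf R})=6$, has the unique solution ${\sf R}=3{\sf A}_2$ (the quick check uses that subtracting the two linear equations yields $a_2+a_4-b_2\geqslant 3$, and the only configuration compatible with rank $6$ is $a_2=3$).\;But then $|W(R)|=216$ and $\exp(W(R))=6$, so Lemma \ref{key}(vii) would demand $216\mid 6\cdot 6=36$, a contradiction.

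For $n=8$ and $n=16$ I would exploit $\nu_3(n)=0$: Lemma \ref{key}(vi) at $p=3$ then yields $\nu_3(|W(R)|)\leqslant 1$, and Proposition \ref{l3}(iii) applies.\;Its last clause furnishes a subgroup of $W(R)$ isomorphic to $W(m{\sf A}_1)$ with $m\geqslant n-2$; composing with the embedding $W(R)\hookrightarrow\mathcal L(K)$ and invoking Proposition \ref{mA1}(a) forces $2^{m-1}\mid n$, hence $m\leqslant 4$ when $n=8$ and $m\leqslant 5$ when $n=16$ --- both incompatible with $m\geqslant n-2$.\;The main obstacle is the $n=6$ case: isolating ${\sf R}=3{\sf A}_2$ really uses Proposition \ref{l3}(ii) in its sharp form, after which the exponent divisibility $|G|\mid n\exp(G)$ kills it cleanly.
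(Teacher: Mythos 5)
Your proposal is correct and follows essentially the same route as the paper: the same inequality $[(n+1)/2]\leqslant\nu_2(|G|)\leqslant 2\nu_2(n)+1$ to reduce to $n\in\{1,2,4,6,8,16\}$, the same identification of ${\sf R}=3{\sf A}_2$ via Proposition \ref{l3}(ii) and elimination by the divisibility $|G|\mid n\exp(G)$ for $n=6$, and the same use of $\nu_3(n)=0$ plus Proposition \ref{l3}(iii) to produce a $W(m{\sf A}_1)$ subgroup contradicting Proposition \ref{mA1} for $n=8,16$ (you cite part (a) where the paper cites its corollary (d) --- an immaterial difference). The only cosmetic deviations are your cleaner $2^{a-1}m\leqslant 2a+1$ case analysis in Step 1 and the explicit (and harmless) reduction to reduced root systems.
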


\begin{proof} First, in Step 1, we shall show
that $n\in \{1, 2, 4, 6, 8, 16\}$.\;Then,  in Steps 2, 3,
and 4, we shall consider  respectively the cases $n=6, 8$,
and 16, and eliminate each of them.

\vskip 1mm

\indent {\it Step $1$.}

Lemma \ref{key}(vi) and \eqref{nu2} yield
\begin{equation}
\label{ineq}
 \left[(n+1)/{2}\right] \leqslant \nu_2(|G|)\leqslant  2 \nu_2(n)+1.
\end{equation}

Let $n\geqslant  3$.\;Then $2\leqslant
\left[(n+1)/{2}\right]$.\;In view of \eqref{ineq},
this implies that
$\nu_2(n)\geqslant  1$, i.e., $n$ is even. Since
$n/2\leqslant  \left[(n+1)/2\right]$, from \eqref{ineq} we infer
\begin{equation}\label{inequ}
2^{n/2}\leqslant  2^{\left[(n+1)/2\right]}\leqslant
2^{2\nu_2(n)+1}\leqslant  2 n^2.
\end{equation}
In addition, if $n$ is not a power of $2$, then
$2^{\nu_2(n)}\cdot 3\leqslant  n$, so \eqref{inequ} yields
\begin{equation}
\label{n6}
2^{n/2}\leqslant  2^{2\nu_2(n)+1}\leqslant  2 \left({n}/{3}\right)^2.
\end{equation}

If in \eqref{n6} we replace $n$ by $n+2$ then
the left-hand side will be multiplied by $2$
while the right-hand side will be multiplied by $(1+2/n)^2<2$,
because $n>4$.\;Taking into account
that  \eqref{n6} becomes equality if $n=6$, we conclude that
 $n=6$ if $n\geqslant  3$ is not a power of $2$.

Now suppose that $n=2^s$, where $s \geqslant  2$.
Then \eqref{ineq} yields
$$2^{s-1} \leqslant  2s+1$$
and therefore $s=2,3$ or $4$, i.e., $n=4,8$ or $16$ respectively.

Taking into account all $n<3$,  we conclude that $n\in\{1,2,4,6,8, 16\}$.

\vskip 2mm

In  Steps 2, 3, and 4, we use the notation of \eqref{sr},
\eqref{sW} introduced in the proof of Lemma \ref{W>}.\;The
type of $R_i$ is denoted by ${\sf R}_i$.

{\it Step $2$.}

Arguing on the contrary, assume that $n=6$.\;Then \eqref{ineq} yields
$3\leqslant \nu_2(|G|)\leqslant 3$, i.e., $\nu_2(|G|)=\nu_2({\sf R})=3$.\;From this and Proposition \ref{l3}(ii) we deduce that
${\sf R}=a_1{\sf A}_1\stackrel{.}{+}a_2{\sf A}_2$, where
\begin{equation*}\label{rest1}
\begin{split}
a_1 +2a_2=6,\quad
a_1+a_2=3.
\end{split}
\end{equation*}
Hence $a_1=0$, $a_2=3$, i.e., ${\sf R}=3{\sf A}_2$.\;This and Table 1 yield
\begin{equation}\label{oe}
|G|=6^3,\; \exp(G)=6.
\end{equation}
On the other hand,  $|G|$ divides $6\exp(G)$ by Lemma 1(vii).\;This contra\-dicts \eqref{oe}.\;Hence
$n\neq 6$.

\vskip 1mm

{\it Step $3$.}

Arguing on the contrary, assume that $n=8$.\;Then
Lemma \ref{key}(vi) yields $\nu_3({\sf R})\leqslant 1$.\;Hence  by Pro\-po\-si\-tion  \ref{l3}(iii), the group $G$
contains a subgroup isomorphic to the Weyl group of a root system of type $(8-2){\sf A}_1$.\;This contradicts Propositi\-on\;\ref{mA1}(d), so we conclude that $n\neq 8$.

{\it Step $4$}

Arguing on the contrary, assume that $n=16$.\;Then
Lemma \ref{key}(vi) yields $\nu_3({\sf R})\leqslant 1$.\;Therefore, by Pro\-po\-si\-tion  \ref{l3}(iii), the group $G$ contains a subgroup isomorphic to the Weyl group of a root system of type $(16-2){\sf A}_1$. This contradicts Propositi\-on\;\ref{mA1}(d).\;Hence $n\neq 16$.
\end{proof}

\section {Proofs of Theorems \ref{repres} and \ref{R}}

\begin{proof}[Proof of Theorem {\rm \ref{repres}}]\

 (i)$\Rightarrow$(ii)\;  Assume that (i) holds.\;In view of Proposition
 \ref{124}, we have to show that if $n=4$, then ${\sf R}$ is
 either ${\sf A}_2\stackrel{.}{+}{\sf B}_2$ or
$ 2{\sf A}_1\stackrel{.}{+}
{\sf A}_2$.

So, let $n=4$.\;Then Lemma \ref{key}(v) (whose
notation we use) yields
\begin{equation}\label{44}
|\psi(G)|\!=\!1, 2, \;\mbox{or}\; 4.
\end{equation}
Next, we have $\nu_2(n)=2$, $\nu_3(n)=0$.\;Therefore,
Lem\-ma\;\ref{key}(vi) yields $\nu_2({\sf R})\!\leqslant\! 5$,
$\nu_3({\sf R})\!\leqslant\! 1$.\;Proposition \ref{l3}(iii) and Table 2 then infer that
$\sf R$ is one of the following root systems
\begin{gather*}
{\sf A}_4,
{\sf A}_1\stackrel{.}+{\sf A}_3,\,
{\sf A}_1\stackrel{.}+{\sf B}_3,\,
{\sf A}_1\stackrel{.}+{\sf C}_3,\,
2{\sf A}_1\stackrel{.}+{\sf A}_2,\\[-.5mm]
2{\sf A}_1\stackrel{.}+{\sf B}_2,\,
4{\sf A}_1,\,
{\sf A}_2\stackrel{.}+{\sf B}_2,\,
2{\sf A}_1\stackrel{.}+{\sf G}_2,\,
{\sf B}_2\stackrel{.}+{\sf G}_2
\end{gather*}

Assume that ${\sf R}={\sf A}_4$.\;Then $G$ is isomorphic to ${\rm Sym}_5$.\;Since the only proper normal subgroup of ${\rm Sym}_5$ is ${\rm Alt}_5$ (see, e.g., \cite[Thm.\,4.7]{Pa68}), and ${\rm Alt}_5$  is noncyclic, Lemma \ref{key} implies that $H$ is trivial, whence $|G|=5!$ divides $n=4$. This contradiction shows that, in fact, ${\sf R}\neq {\sf A}_4$.

Assume that ${\sf R}$ is one of the root systems
\begin{equation}\label{excl}
{\sf A}_1\stackrel{.}+{\sf B}_3,\,
{\sf A}_1\stackrel{.}+{\sf C}_3,\,
2{\sf A}_1\stackrel{.}+{\sf B}_2,\,
4{\sf A}_1,\,
2{\sf A}_1\stackrel{.}+{\sf G}_2,\,
{\sf B_2}\stackrel{.}+{\sf G}_2.
\end{equation}
Then, by Proposition \ref{l3}(iii), the group $G$
contains a subgroup isomorphic to the Weyl group of a root system of type
$4{\sf A}_1$.\;This contradicts Proposition \ref{mA1}(b). Hence, in fact,
${\sf R}$ is none of the root systems \eqref{excl}.

Finally, assume that ${\sf R}={\sf A}_1\stackrel{.}+{\sf A}_3$.\;Then $G$ is isomorphic to
${\rm Sym}_2\times {\rm Sym}_4$ and
\begin{equation}\label{orddd3}
|G|=2^4\cdot 3.
\end{equation}
Lemma \ref{key}(ii) and \eqref{orddd3} imply that,
respectively to \eqref{44}, we have  $|H|=2^4\cdot 3$,
$2^3\cdot 3$, or $2^2\cdot 3$, and, accordingly,
$\varphi(|H|)=2^4, 2^3$, or $2^2$.\;Since only the last
integer divides $4$, by Lemma \ref{key}(iv) we conclude that
$|\psi(G)|=4$.\;Hence $G$ contains a cyclic subgroup ${\rm mult}(H)$, whose
generator has order 12.\;But the maximum of orders of elements of
${\rm Sym}_2\times {\rm Sym}_4$ is $6$ (see, e.g., \cite[p.\;6]{Pa68}).\;This
contradiction
shows that, in fact, ${\sf R}\neq {\sf A}_1\stackrel{.}+{\sf A}_3$.

The proof of (i)$\Rightarrow$(ii) is now completed.

\vskip 2mm

(ii)$\Rightarrow$(i)\; If ${\sf R}\in \{{\sf A}_1, {\sf A}_2,
{\sf B}_2, {\sf G}_2, 2{\sf A}_1
\}$, then (i) follows from Proposi\-tion\;\ref{rank12}
and Definition \ref{def}.

Consider the case ${\sf R}= {\sf A}_2\stackrel{.}{+}{\sf B}_2$.

Let $K$ be the biquadratic field
$\mathbb Q(\sqrt{-3}, \sqrt{-1})$.\;Then
$$K=\mathbb Q({\sqrt{-3}})\otimes_{\mathbb Q}
\mathbb Q(\sqrt{-1}).$$
This equality determines the natural
homo\-mor\-phism
\begin{equation}\label{times}
\mathcal L
(\mathbb Q(\sqrt{-3}))\times \mathcal L(\mathbb Q
(\sqrt{-1}))\to \mathcal L(K),
\end{equation}
whose restriction to ${\rm Aut}(\mathbb
Q(\sqrt{-3}))\times {\rm Aut}(\mathbb Q(\sqrt{-1}))$
is an isomorphism with ${\rm Aut}(K)$ (see
\cite[Chap.\;VIII, \S1, Thm.\;5]{La65}).
The kernel of homomor\-phism \eqref{times} is 
$\{({\rm mult}(a), {\rm mult}(a^{-1}))
\mid a\in \mathbb Q^*\}$.

Let $R_1$ and $R_2$ be
respectively the realizations of type ${\sf A}_2$ in
$\mathbb Q(\sqrt{-3})$
and of type ${\sf B}_2$
in $\mathbb Q(\sqrt{-1})$ constructed in the proof of
Proposition \ref{rank12}.
Since $-1\notin W(R_1)$,
the restriction of homomorphism \eqref{times}
to the subgroup $W(R_1)\times
W(R_2)$ is an embedding.\;Therefore, its image is the
subgroup of $\mathcal L(K)$ isomorphic
to the Weyl group of a root system of type ${\sf A}_2
\stackrel{.}{+}{\sf B}_2$. This proves that (i) holds
if ${\sf R}= {\sf A}_2\stackrel{.}{+}{\sf B}_2$.

Now consider the case ${\sf R}=
{\sf A}_2\stackrel{.}{+} 2{\sf A}_1$.

If $R_3$ is a subset of  $R_2$,
which is a realization of type
$2{\sf A}_1$ in $K$,
then the restriction of homomorphism \eqref{times} to
$W({R_1})\times W({R_3})$
is the subgroup of $\mathcal L(K)$ isomorphic
to the Weyl group of a root system of type
${\sf A}_2\stackrel{.}{+} 2{\sf A}_1$.\;Thus (i)
holds if ${\sf R}$ is of this type.

This completes the proof of (ii)$\Rightarrow$(i)
and that of Theorem \ref{repres}.\end{proof}

\begin{proof}[Proof of Theorem {\rm \ref{R}}]

(i)$\Rightarrow$(ii)\;In view of Theorem \ref{repres}
and Definition \ref{def}, we have to show that
if $\sf R={\sf A}_2\stackrel{.}+ {\sf B}_2$ or ${\sf A}_2
\stackrel{.}+ 2{\sf A}_1$, then $\sf R$ admits no
realizations in the number fields.
Arguing on
the contrary, assume that
 this is not the case, i.e., $\sf R$ admits a
 realization in a number field $K$.

 The linear space $K$
over $\mathbb Q$ is then a direct sum of two
$2$-dimensional linear subspaces $L_1$ and $L_2$ such that
\begin{enumerate}[\hskip 4.2mm\rm(a)]
\item $L_i$ is the linear span of $R_i:=
R\bigcap L_i$ over $\mathbb Q$ for every $i$;
\item $R_1$ is a root system in $L_1$ of type ${\sf A}_2$;
\item $R_2$ is a root system in $L_2$ of
type
${\sf B}_2$ or
$2{\sf A}_1$;
\item $R=R_1\stackrel{.}+
R_2$.
\end{enumerate}
Let $\iota\colon {\rm GL}_\mathbb Q(L_1)\times {\rm GL}_\mathbb Q(L_2)
\!\hookrightarrow\! {\rm GL}_\mathbb Q(K)$
be the natural group embedding. Then
\begin{equation}\label{split}
W(R)=\iota(W(R_1))\times \iota(W(R_2)).
\end{equation}

In view of (b), the group $\iota(W(R_1))$ is
isomorphic to
${\rm Sym}_3$,
hence contains an element $z$ of order $3$.\;By \eqref{split}, we have
\begin{equation}\label{LK}
L_2\subseteq K^z.
\end{equation}

According to Lemma \ref{keyK}, there are
uniquely defined elements
$a\in K^*$ and $g\in {\rm Aut}(K)$ such that
$z={\rm mult}(a)g$.\;From \eqref{psi}
we infer
that ${\rm ord}(g)$ divides ${\rm ord}(z)\!=\!3$.\;Since
${\rm ord}(g)$ divides $|{\rm Aut}(K)|$, which,
in turn, divides $\dim_{\mathbb Q}(K)\!=\!4$, we conclude that
\begin{equation}\label{z}
z={\rm mult}(a).
\end{equation}
As ${\rm ord}(z)\neq 1$, we have $a\neq 1$.\;From this, \eqref{z}, and \eqref{mult1} we then conclude that
$K^z=0$ contrary to \eqref{LK}.\;This completes
the proof of (i)$\Rightarrow$(ii).

\vskip 1mm

(ii)$\Rightarrow$(i)\; This follows from
Proposition \ref{rank12}.
\end{proof}

\end{document}